\documentclass[10pt]{article}
\usepackage{amssymb,latexsym, amsmath, enumerate, amsthm, mathscinet, mathtools}
\usepackage{cases, verbatim}
\usepackage[active]{srcltx}
\usepackage{hyperref}
\hypersetup{
      CJKbookmarks=true
	colorlinks=true,
	linkcolor=blue,
	filecolor=blue,      
	urlcolor=blue,
	citecolor=blue,
}
\usepackage{xcolor}

\textheight=23cm \textwidth=16cm \topmargin -1.5cm

\oddsidemargin 0.1cm \evensidemargin 0.1cm \headsep 20mm \headheight
10mm \voffset -10mm \openup 0.8mm
\parskip0.1cm

\def\R{\mathbb R}

\def\C{\mathbf C}
\def\Q{\mathbf Q}

\def\b{\bold}
\def\I{\mathrm{id}}
\def\l{\langle}
\def\r{\rangle}
\def\tr{\mathrm{Tr}}
\def\SS{\mathcal{S}^+}
\def\SSS{\mathcal{S}}
\def\D{\mathbf D}

\newtheorem{theorem}{Theorem}
\newtheorem{lemma}{Lemma}
\newtheorem{remark}{Remark}
\newtheorem{corollary}{Corollary}
\newtheorem{proposition}{Proposition}

\parskip 0.0cm
\title{On the heat semigroup approach to the geometric Forward-Reverse Brascamp--Lieb inequality}
\author{Ye Zhang}
\date{}

\begin{document}

\renewcommand{\theequation}{\thesection.\arabic{equation}}
 \maketitle

\vspace{-1.0cm}

\bigskip
	
	{\bf Abstract.} In this short paper we provide a new proof of the geometric Forward-Reverse Brascamp--Lieb inequality, using the approach of the heat semigroup, or the heat flow. Furthermore, we characterize all the Forward-Reverse Brascamp--Lieb data such that the initial relation can be preserved by some heat flow.
	
	\medskip
	
	{\bf Mathematics Subject Classification (2020):} {\bf 26D15; 35K05; 39B72}
	
	\medskip
	
	{\bf Key words and phrases:} Barthe's inequality; Brascamp--Lieb inequality; Forward-Reverse Brascamp--Lieb inequality; Gaussian extremizer; Heat semigroup

\bigskip

\section{Introduction}
\setcounter{equation}{0}

Recall that the Forward-Reverse Brascamp--Lieb inequality, studied recently in \cite{LCCV18, CL21}, unifies both the Brascamp--Lieb inequality and Barthe's inequality (which is also called reverse Brascamp--Lieb inequality). As a result, it generalizes several famous inequalities, such as H\"older's inequality, Young's inequality, Loomis--Whitney inequality, Pr\'ekopa--Leindler inequality, etc.

\subsection{Notation}


Throughout this paper, by {\it Euclidean space $V$}, we mean a finite dimensional Hilbert space equipped with the inner product $\l \cdot, \cdot \r_V$ and the Lebesgue measure (more precisely, the measure induced by the Riemannian volume form if we regard $V$ as a Riemannian manifold). On the direct sum of two Euclidean spaces $V_1 \oplus V_2$, the inner product is defined by
\[
\l v_1 + v_2, w_1 + w_2 \r_{V_1 \oplus V_2} := \l v_1, w_1 \r_{V_1} + \l v_2, w_2 \r_{V_2}, \qquad \forall \, v_1, w_1 \in V_1, v_2, w_2 \in V_2.
\]
With this inner product on $V_1 \oplus V_2$ the subspaces $V_1$ and $V_2$ are orthogonal with each other. We use $|\cdot|_V$ to denote the norm induced by the inner product $\l \cdot, \cdot \r_V$. Given a Euclidean space $V$ and a linear map $A$ from $V$ to itself, we use $\tr(A)$ and $\det(A)$ to denote its {\it trace} and {\it determinant} respectively. Moreover, for a linear map between two  Euclidean spaces $A: V_1 \to V_2$, we denote the {\it adjoint of $A$} by $A^*$, that is, $A^*$ is a linear map from $V_2$ to $V_1$ satisfying
\[
\l A v_1, v_2 \r_{V_2} = \l v_1, A^* v_2 \r_{V_1}, \qquad \forall \, v_1 \in V_1, v_2 \in V_2.
\]
We use $\SSS(V)$ to denote the set of self-adjoint linear maps on $V$, namely set of linear map $A$ from $V$ to itself such that $A = A^*$. The notation $\SS(V)$ (resp. $\overline{\SS}(V)$) is reserved for the set of self-adjoint positive  definite (resp. semidefinite) linear maps on $V$, that is, $A \in \SS(V)$ (resp. $A \in \overline{\SS}(V)$) if and only if
\[
A \in \SSS(V) \qquad \mbox{and} \qquad \l A v,v \r_V > 0 \ \mbox{(resp. $ \l A v,v \r_V \ge 0$)}, \qquad \forall \, v  \in V \setminus \{0\}.
\]
For $A \in \overline{\SS}(V)$, we use $\sqrt{A}$ to denote its {\it square root}, namely the unique element $B \in \overline{\SS}(V)$ such that $B^2 = A$. For two elements $A, B \in \SSS(V)$, we say $A \ge B$ if $A - B \in \overline{\SS}(V)$. A {\it centred Gaussian function} on a Euclidean space $V$ is a function with the following form:
\[
x \mapsto \exp\left( -\langle A x,x \rangle_V \right), \qquad \forall \, x \in V, A \in \SS(V).
\]

Furthermore, for a given Euclidean space $V$, we denote the {\it Laplacian} and {\it gradient} by $\Delta_V$ and $\nabla_V$ respectively. The {\it Hessian matrix} of a $C^2$ function $f$ is denoted by $\nabla^2_V f$. Recall that $\Delta_V f = \tr(\nabla_V^2 f)$. Also recall that $\Delta_V = \mathrm{div}_V (\nabla_V)$, where $\mathrm{div}_V$ is the {\it divergence operator}. More generally, for $A \in \SS(V)$,  we use $\Delta_{V,A}$ to denote the following operator:
\[
\Delta_{V,A} f:= \mathrm{div}_V (A \nabla_V f), \qquad \forall \, f \in C^2.
\]
In the following we use $(e^{t \Delta_{V, A}})_{t > 0}$ to denote the {\it heat semigroup} with infinitesimal generator $\Delta_{V, A}$ on $L^1(V)$. In fact, for $f \in L^1(V)$, we have
\begin{align}\label{conv}
(e^{t \Delta_{V, A}} f)(x) = \int_V f(y) p_t^{V,A}(x - y) dy = \int_V f(x - y) p_t^{V,A}(y) dy,
\end{align}
where the {\it heat kernel} of $(e^{t \Delta_{V, A}})_{t > 0}$ is given by the following explicit formula
\begin{align}\label{exphk}
p_t^{V,A}(x) = \frac{1}{(4\pi t)^{\frac{\mathrm{dim} V }{2}} \sqrt{\det(A)}} e^{-\frac{\langle A^{-1} x,x \rangle_V}{4t}}, \qquad x \in V.
\end{align}
When $A = \I_V$, {\it the identity matrix} on $V$, we just write $(e^{t \Delta_{V}})_{t > 0}$ and $p_t^{V}$ for simplicity. From the expression above, we know that the function $u(x,t) := (e^{t \Delta_{V, A}} f)(x)$ is a  smooth function on $V \times (0,+\infty)$ if $f \in L^1(V)$. If in addition we have $f \ge 0$ with $\int_V f > 0$, the function $u$ is also  positive. Note that by formula \eqref{conv}, the heat semigroup can also be defined on the space of non-negative, measurable functions. For example, we have $e^{t \Delta_{V, A}} 1 = 1$. However, in this case, $e^{t \Delta_{V, A}} f$ may take value $+\infty$ at some point for general $f$. For more details of the heat semigroup, we refer to \cite[\S~2.7]{G09} (see also \cite[Theorem 7.19]{G09} for the smoothness result).

Given Euclidean spaces $(E_i)_{1 \le i \le k}$ and $(E^j)_{1 \le j \le m}$, we define $E_0 := \oplus_{i = 1}^k E_i$, and $E^0 := \oplus_{j = 1}^m E^j$. We use $\pi_{E_i}$ and $\pi_{E^j}$ to denote the orthogonal projection of $E_0$ onto $E_i$ and  $E^0$ onto $E^j$ respectively. In this paper we always regard elements in Euclidean spaces as column vectors. In this spirit throughout this paper we identify the spaces $E_0 = \oplus_{i = 1}^k E_i$ and $E^0 = \oplus_{j = 1}^m E^j$ with $\prod_{i = 1}^k E_i$ and $\prod_{j = 1}^m E^j$
with the following maps:
\begin{align}\label{iden}
\sum_{i = 1}^k v_i \mapsto \begin{pmatrix}
v_1 \\
v_2 \\
\vdots \\
v_k
\end{pmatrix}, 
\qquad
\sum_{j = 1}^m v^j \mapsto \begin{pmatrix}
v^1 \\
v^2 \\
\vdots \\
v^m
\end{pmatrix},
\qquad \forall \, v_i \in E_i, v^j \in E^j, 1 \le i \le k, 1 \le j \le m.
\end{align}

Let $\Q$ be a linear map from $E_0$ to $E^0$, and  $\b{c} := (c_i)_{1 \le i \le k}, \b{d} := (d_j)_{1 \le j \le m}$ be collections of positive numbers such that
\begin{align}\label{ass0}
\sum_{i = 1}^k c_i \mathrm{dim}(E_i) = \sum_{j = 1}^m d_j \mathrm{dim}(E^j).
\end{align}
Such triple $(\b{c}, \b{d}, \Q)$ is called a {\it  Forward-Reverse Brascamp--Lieb datum}.

For $A_i \in \overline{\SS}(E_i), i = 1, \ldots, k$ (resp. $A^j \in \overline{\SS}(E^j), j = 1, \ldots, m$), we let
$\Pi(A_1, \ldots, A_k)$ (resp. $\Pi(A^1, \ldots, A^m)$) denote the set of $A_0 \in \overline{\SS}(E_0)$ (resp.$A^0 \in \overline{\SS}(E^0)$) satisfying
\[
\pi_{E_i} A_0 \pi_{E_i}^* = A_i, \quad \forall \, 1 \le i \le k \qquad \mbox{(resp. $\pi_{E^j} A^0 \pi_{E^j}^* = A^j, \quad \forall \, 1 \le j \le m$).}
\]
In other words, under the identification \eqref{iden}, $A_0$ and $A^0$ have the following block forms:
\[
A_0 = \begin{pmatrix}
A_1 & * & \ldots & * \\
* & A_2 & \ldots & * \\
\vdots & \vdots & \ddots & \vdots \\
* & * & \ldots & A_k
\end{pmatrix},
\qquad 
A^0 = \begin{pmatrix}
A^1 & * & \ldots & * \\
* & A^2 & \ldots & * \\
\vdots & \vdots & \ddots & \vdots \\
* & * & \ldots & A^m
\end{pmatrix}.
\]

Finally for $\b{c} := (c_i)_{1 \le i \le k}, \b{d} := (d_j)_{1 \le j \le m}$, we define $\Lambda_{\b{c}} := \sum_{i =1}^k c_i \pi_{E_i}^* \I_{E_i} \pi_{E_i}$ and $\Lambda_{\b{d}} := \sum_{j =1}^m d_j\pi_{E^j}^* \I_{E^j} \pi_{E^j}$. Note that with the identification \eqref{iden}, $\Lambda_{\b{c}}$ and $\Lambda_{\b{d}}$ have the following block forms:
\[
\Lambda_{\b{c}} = \begin{pmatrix}
c_1 \I_{E_1} & 0 & \ldots & 0 \\
0 & c_2 \I_{E_2} & \ldots & 0 \\
\vdots & \vdots & \ddots & \vdots \\
0 & 0 & \ldots & c_k \I_{E_k}
\end{pmatrix},
\quad 
\Lambda_{\b{d}} = \begin{pmatrix}
d_1\I_{E^1} & 0 & \ldots & 0 \\
0 & d_2\I_{E^2} & \ldots & 0 \\
\vdots & \vdots & \ddots & \vdots \\
0 & 0 & \ldots & d_m\I_{E^m}
\end{pmatrix}.
\]

\subsection{Forward-Reverse Brascamp--Lieb inequality}

In our setting, given a Forward-Reverse Brascamp--Lieb datum $(\b{c}, \b{d}, \Q)$, the {\it Forward-Reverse Brascamp--Lieb inequality}, is the following:
\begin{align}\label{FRBL}
\prod_{i = 1}^k \left( \int_{E_i} f_i \right)^{c_i} \le C(\b{c}, \b{d}, \Q) \prod_{j = 1}^m  \left( \int_{E^j} g_j \right)^{d_j}
\end{align}
for all non-negative measurable functions $(f_i)_{1 \le i \le k}, (g_j)_{1 \le j \le m}$ satisfying
\begin{align}\label{ass3}
\prod_{i = 1}^k f_i^{c_i}(\pi_{E_i} x) \le \prod_{j = 1}^m g_j^{d_j}(\pi_{E^j} \Q x), \qquad \forall \, x \in E_0.
\end{align}
Here the {\it Forward-Reverse Brascamp--Lieb constant} $C(\b{c}, \b{d}, \Q)$ is the smallest constant for which \eqref{FRBL} holds and it could be $+\infty$. Measurable functions $(f_i)_{1 \le i \le k}, (g_j)_{1 \le j \le m}$ are called {\it extremizers} if \eqref{FRBL} holds with equality (with positive, finite value on both sides). Moreover, if $(f_i)_{1 \le i \le k}, (g_j)_{1 \le j \le m}$ are extremizers and all the functions $(f_i)_{1 \le i \le k}, (g_j)_{1 \le j \le m}$ are centred Gaussian functions, we call them {\it Gaussian extremizers}.

\begin{remark}
Actually \eqref{ass0} is a necessary condition to make $C(\b{c}, \b{d}, \Q) < +\infty$, which can be seen by a scaling argument. 
\end{remark}

\begin{remark}\label{notation}
Here we follow the notation in \cite{LCCV18}, which is a little bit different from the one in \cite{CL21}. As mentioned in \cite[Remark 1.5]{CL21}, the extra scaling in \cite{CL21} is due to the duality relation there. Since we will not consider the duality relation here, we can absorb the scaling into the map. Furthermore, if we write 
\begin{align}\label{defQji}
\Q_{ji} := \pi_{E^j} \Q \pi^*_{E_i}, \qquad \forall \, 1 \le i \le k, 1 \le j \le m,
\end{align}
which is a linear map from $E_i$ to $E^j$, we have $\Q_{ji} = c_i B_{ij}$ in \cite[(2)]{CL21}. Note that with the identification \eqref{iden}, we can write
\begin{align}\label{maQ}
\Q = \begin{pmatrix}
\Q_{11} & \Q_{12} & \ldots & \Q_{1k} \\
\Q_{21} & \Q_{22} & \ldots & \Q_{2k} \\
\vdots & \vdots & \ddots & \vdots \\
\Q_{m1} & \Q_{m2} & \ldots & \Q_{mk}
\end{pmatrix}
= \begin{pmatrix}
\Q_1 \\
\Q_2 \\
\vdots \\
\Q_m
\end{pmatrix},
\end{align}
where
\begin{align}\label{defQj}
\Q_j := \sum_{i = 1}^k \Q_{ji} \pi_{E_i} = \pi_{E^j} \Q, \qquad \forall \, 1 \le j \le m.
\end{align}
\end{remark}

\begin{remark}
If $k = 1$ and $\b{c} = (1)$, the Forward-Reverse Brascamp--Lieb inequality reduces to the (classical) Brascamp--Lieb inequality. On the other hand, if $m = 1$ and $\b{d} = (1)$, it reduces to the Barthe's inequality, or reverse Brascamp--Lieb inequality. In such cases we simply say Brascamp--Lieb constant (resp. datum) and Reverse Brascamp--Lieb constant (resp. datum) instead of Forward-Reverse Brascamp--Lieb constant (resp. datum).
\end{remark}

The (classical) Brascamp--Lieb inequality was first formulated in \cite{BL76} to find the best constants in Young's inequality. By  Lieb's Theorem (see \cite[Theorem 6.2]{L90}), the Brascamp--Lieb constant can be obtained by only considering centred Gaussian functions. In other words, the Brascamp--Lieb constant is exhausted by centred Gaussian functions. Then in \cite{BCCT08} the authors gave a sufficient and necessary condition on the finiteness of the Brascamp--Lieb constant, as well as on the existence of the extremizers. Finally  all the extremizers are characterized  in \cite{V08}.

On the other hand, the Reverse Brascamp--Lieb inequality was formulated in \cite{B982}, where a counterpart of Lieb's Theorem is also proved. However, the extremizers are only characterized in special cases in \cite{BKX23}.

For the case of the Forward-Reverse Brascamp--Lieb inequality, a generalization of Lieb's Theorem is proved in both \cite{LCCV18, CL21}. A sufficient and necessary condition on the finiteness of the  Forward-Reverse Brascamp--Lieb  constant is also provided in \cite{CL21}.

\subsection{Geometric Forward-Reverse Brascamp--Lieb datum}\label{ss13}

By a clever induction argument (cf. \cite{CL21}, see also \cite{BCCT08} for the case of Brascamp--Lieb inequality), it turns out that the study of general Forward-Reverse Brascamp--Lieb inequalities essentially  reduced to the geometric ones. A Forward-Reverse Brascamp--Lieb datum is called  {\it geometric} if 
\begin{align}\label{ass2}
\Q^* \Lambda_{\b{d}} \Q \le \Lambda_{\b{c}}
\end{align}
and there exists a $\Sigma \in \Pi(\I_{E_1}, \ldots, \I_{E_k})$ such that $\Q \Sigma \Q^* \in \Pi(\I_{E^1}, \ldots, \I_{E^m})$. By Remark \ref{notation}, our definition here is the same as the one in \cite[Definition 1.21]{CL21} but we write in a more compact way by using matrices. This will help us do the computation in the proof of our main theorem.

In the case $k = 1$ and $\b{c} = (1)$, the only choice of $\Sigma$ is $\I_{E_1}( = \I_{E_0})$. As a result, the geometric conditions reduce to
\[
\Q_j \Q_j^* = \I_{E^j}, \quad \forall \, 1 \le j \le m, \qquad \sum_{j = 1}^m d_j \Q_j^* \Q_j \le \I_{E_0},
\]
where $\Q_j$ is defined by \eqref{defQj}. By \eqref{ass0}, the traces of the left-hand side and right-hand side of the last inequality coincide, which implies equality, that is, 
\[
\sum_{j = 1}^m d_j \Q_j^* \Q_j = \I_{E_0}.
\]
Thus, the geometric conditions here reduce to the ones of the geometric Brascamp--Lieb inequality (cf. \cite[Definition 2.1]{BCCT08}).

Given two Forward-Reverse Brascamp--Lieb data $(\b{c}, \b{d}, \Q)$ and $(\b{c}', \b{d}', \Q')$, we say they are {\it equivalent} if $\b{c} = \b{c}', \b{d} = \b{d}'$, and there exist invertible linear maps $C_i : E_i \to E_i, 1 \le i \le k$ and  $D_j : E^j \to E^j, 1 \le j \le m$ such that
\[
\Q_{ji}' = D_j^{-1} \Q_{ji} C_i^{-1}, \qquad \forall \, 1 \le i \le k, 1 \le j \le m,
\]
where $\Q_{ji}$ is defined in \eqref{defQji}.

\subsection{Main result}

In the literature, there are several methods of proofs for the Brascamp--Lieb inequality and Reverse Brascamp--Lieb inequality: (1) by rearrangement inequality \cite{BL76, BLL74}; (2) by optimal transport \cite{B982, B98}; (3) by a probabilistic argument \cite{B00, L14}; (4) by heat semigroup \cite{BC04, BCLM11,  BH09, BCCT08, B00, CC09, CLL04}. Regarding the Forward-Reverse Brascamp--Lieb inequality, the proof given in \cite{CL21} is by a probabilistic argument developing the ones in \cite{B00, L14}. The target of this paper is to give a new proof of the geometric Forward-Reverse Brascamp--Lieb inequality by using the heat semigroup. Here is our main result: if the Forward-Reverse Brascamp--Lieb datum $(\b{c}, \b{d}, \Q)$ is geometric, then the relation \eqref{ass3} can be preserved by the heat flow.

\begin{theorem}\label{mthm}
Given a geometric Forward-Reverse Brascamp--Lieb datum $(\b{c}, \b{d}, \Q)$, if all the functions $(f_i)_{1 \le i \le k}, (g_j)_{1 \le j \le m}$ are non-negative and integrable, and \eqref{ass3} holds, namely
\[
\prod_{i = 1}^k f_i^{c_i}(\pi_{E_i} x) \le \prod_{j = 1}^m g_j^{d_j}(\pi_{E^j} \Q x), \qquad \forall \, x \in E_0,
\]
then we have
\begin{align}\label{heat}
\prod_{i = 1}^k (e^{t\Delta_{E_i}}f_i)^{c_i}(\pi_{E_i} x) \le \prod_{j = 1}^m (e^{t\Delta_{E^j}}g_j)^{d_j}(\pi_{E^j} \Q x), \qquad \forall \, x \in E_0, t > 0.
\end{align}
\end{theorem}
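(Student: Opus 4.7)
The plan is to transfer everything to a single heat equation on the common space $E_0$, to compute the parabolic defect of the log-ratio of the two sides, and to exploit the geometric hypothesis at critical points via a maximum principle. The first observation is that the geometric identities $\pi_{E_i} \Sigma \pi_{E_i}^* = \I_{E_i}$ and $\pi_{E^j} \Q \Sigma \Q^* \pi_{E^j}^* = \I_{E^j}$, combined with the chain rule for the $\Sigma$-Laplacian, imply that
\[
\tilde u_i(x, t) := (e^{t\Delta_{E_i}} f_i)(\pi_{E_i} x), \qquad \tilde v_j(x, t) := (e^{t\Delta_{E^j}} g_j)(\pi_{E^j} \Q x)
\]
are, as functions on $E_0 \times (0, \infty)$, solutions of the same heat equation $\partial_t w = \Delta_{E_0, \Sigma} w$. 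By a standard smoothing/positivization of the initial data I may assume $\tilde u_i, \tilde v_j > 0$, so that
\[
\Phi(x, t) := \sum_{j = 1}^m d_j \log \tilde v_j(x, t) - \sum_{i = 1}^k c_i \log \tilde u_i(x, t)
\]
is smooth on $E_0 \times (0, \infty)$ with $\Phi(\cdot, 0) \ge 0$ by the hypothesis \eqref{ass3}.

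The second step is to compute the parabolic defect of $\Phi$. Using the identity $(\partial_t - \Delta_{E_0, \Sigma}) \log w = |\nabla_{E_0} \log w|_\Sigma^2$ valid for any positive solution $w$ of the $\Sigma$-heat equation, together with the fact that $|\nabla_{E_0} \log \tilde u_i|_\Sigma^2 = |\hat p_i|_{E_i}^2$ and $|\nabla_{E_0} \log \tilde v_j|_\Sigma^2 = |\hat q_j|_{E^j}^2$ --- the $\Sigma$-norms collapse to the intrinsic Euclidean norms precisely because of the two geometric identities of the first step --- where $\hat p_i \in E_i$ and $\hat q_j \in E^j$ are the intrinsic log-gradients of $e^{t\Delta_{E_i}} f_i$ at $\pi_{E_i} x$ and of $e^{t\Delta_{E^j}} g_j$ at $\pi_{E^j} \Q x$, one finds
\[
(\partial_t - \Delta_{E_0, \Sigma}) \Phi = \sum_{j = 1}^m d_j |\hat q_j|_{E^j}^2 - \sum_{i = 1}^k c_i |\hat p_i|_{E_i}^2.
\]

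The algebraic heart of the proof is then to extract the correct sign at critical points. At any point where $\nabla_{E_0} \Phi = 0$, assembling the components into $\hat p := (\hat p_i)_i \in E_0$ and $\hat q := (\hat q_j)_j \in E^0$ yields the linear constraint $\Lambda_{\b c} \hat p = \Q^* \Lambda_{\b d} \hat q$; eliminating $\hat p$ gives
\[
\sum_{j = 1}^m d_j |\hat q_j|^2 - \sum_{i = 1}^k c_i |\hat p_i|^2 = \langle (\Lambda_{\b d}^{-1} - \Q \Lambda_{\b c}^{-1} \Q^*)(\Lambda_{\b d} \hat q), \Lambda_{\b d} \hat q\rangle_{E^0},
\]
whose non-negativity (i.e.\ $\Lambda_{\b d}^{-1} \ge \Q \Lambda_{\b c}^{-1} \Q^*$) is equivalent to the geometric hypothesis $\Q^* \Lambda_{\b d} \Q \le \Lambda_{\b c}$ via the standard equivalence $M^* M \le \I \Leftrightarrow M M^* \le \I$ applied to $M := \Lambda_{\b d}^{1/2} \Q \Lambda_{\b c}^{-1/2}$.

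The final step is a maximum-principle argument: at an interior spatial minimum of $\Phi(\cdot, t)$ one has $\Delta_{E_0, \Sigma} \Phi \ge 0$ and, by the previous step, $\partial_t \Phi \ge \Delta_{E_0, \Sigma} \Phi \ge 0$; replacing $\Phi$ by $\Phi + \epsilon t$ makes this strict, rules out such a minimum in time, and forces $\Phi + \epsilon t \ge \Phi(\cdot, 0) \ge 0$. Letting $\epsilon \to 0$ and undoing the approximation closes the proof. The \emph{main obstacle} will be the non-compactness of $E_0$: the spatial infimum of $\Phi$ may not be attained at a finite point. I would handle this by using the explicit Gaussian decay of $\tilde u_i, \tilde v_j$ to obtain the asymptotic $\Phi(x, t) \sim \langle (\Lambda_{\b c} - \Q^* \Lambda_{\b d} \Q) x, x\rangle_{E_0} / (4t)$ as $|x| \to \infty$, which by the geometric condition is asymptotically $\ge 0$ and therefore guarantees $\liminf_{|x| \to \infty} \Phi(x, t) \ge 0$ --- so the infimum is either attained at a finite critical point (already handled) or it is already $\ge 0$.
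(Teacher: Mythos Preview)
Your core computation --- lifting everything to the common $\Sigma$-heat equation on $E_0$ and reading off $(\partial_t-\Delta_{E_0,\Sigma})\Phi=\sum_j d_j|\hat q_j|^2-\sum_i c_i|\hat p_i|^2$ --- is correct and is essentially the paper's argument in a cleaner wrapping: what the paper isolates as a separate trace inequality (Lemma~\ref{lem1}(2)) becomes in your formulation the automatic fact $\Delta_{E_0,\Sigma}\Phi=\tr(\Sigma\,\nabla_{E_0}^2\Phi)\ge 0$ at a spatial minimum, and your treatment of the gradient term at a critical point is exactly Lemma~\ref{lem1}(1). So the mechanism is the same as the paper's Proposition~\ref{p1}.

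The genuine gap is precisely where you flag it. Your proposed control at spatial infinity does not work. First, the asymptotic $\Phi(x,t)\sim\langle(\Lambda_{\b c}-\Q^*\Lambda_{\b d}\Q)x,x\rangle_{E_0}/(4t)$ presupposes Gaussian decay of $e^{t\Delta}f_i$ and $e^{t\Delta}g_j$, which fails for generic $L^1$ data. Second, and more seriously, the quadratic form $\Lambda_{\b c}-\Q^*\Lambda_{\b d}\Q$ is only $\ge 0$, never guaranteed $>0$: in the classical geometric Brascamp--Lieb case ($k=1$, $\b c=(1)$) it vanishes identically (Subsection~\ref{ss13}), so your asymptotic degenerates to a constant and gives no lower bound on $\liminf_{|x|\to\infty}\Phi$ --- indeed that constant is $\sum_j d_j\log\int g_j-\sum_i c_i\log\int f_i$, whose non-negativity is the very inequality one is proving. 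The paper handles this, and simultaneously the ``standard smoothing/positivization'' you invoke, by a specific one-sided approximation: replace each $f_i$ by an upper semicontinuous compactly supported minorant and each $g_j$ by a lower semicontinuous majorant via Vitali--Carath\'eodory (Theorem~\ref{TVC}) --- one-sided is forced because any approximation must preserve \eqref{ass3}, which is why an off-the-shelf smoothing is not available --- and then add $\delta>0$ to each $g_j$. Compact support of $f_i$ then forces $-\sum_i c_i\log(e^{t\Delta_{E_i}}f_i)(\pi_{E_i}x)\to+\infty$ uniformly in $t$ (Lemma~\ref{lem2}) while $e^{t\Delta_{E^j}}g_j+\delta\ge\delta$ keeps the other terms bounded below, giving $\Phi\to+\infty$ at infinity regardless of the quadratic form. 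The same semicontinuity hypotheses also supply the $\liminf_{t\to 0^+}\Phi\ge 0$ needed for the maximum principle (Lemma~\ref{lem3}), which your outline does not address.
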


\begin{remark}
Recall that for the geometric Brascamp--Lieb inequality, the heat semigroup approach is a little bit different from what we stated above. To be more precise, given a geometric Forward-Reverse Brascamp--Lieb datum with $k = 1$ and $\b{c} = (1)$ (see Subsection \ref{ss13} for details), instead of the preservation property under the heat flow, the usual heat semigroup approach showed that the following function is increasing in time $t$:
\[
Q(t) := \int_{E_0}  \prod_{j = 1}^m (e^{t\Delta_{E^j}}g_j)^{d_j}( \Q_j x) dx, \quad t \ge 0.
\]
In fact, our Theorem \ref{mthm} implies $Q'(t) \ge 0$: Fix a time $t \ge 0$ and a positive number $s$. Define the function $f_{1,t}$ on $E_1 = E_0$ by
\[
f_{1,t}(x) := \prod_{j = 1}^m (e^{t\Delta_{E^j}}g_j)^{d_j}( \Q_j x), \quad x \in E_1 = E_0.
\]
It follows from Theorem \ref{mthm} that
\begin{align}\label{relheat}
(e^{s \Delta_{E_1}}f_{1,t})(x) \le \prod_{j = 1}^m (e^{(t + s)\Delta_{E^j}}g_j)^{d_j}( \Q_j x), \quad \forall \, x \in E_1 = E_0.
\end{align}
Taking integral in \eqref{relheat}, since the heat semigroup preserves the $L^1$ norm, we obtain
\[
\int_{E_0} f_{1,t}(x) dx \le \int_{E_0} \prod_{j = 1}^m (e^{(t + s)\Delta_{E^j}}g_j)^{d_j}( \Q_j x) dx,
\]
which is exactly $Q(t) \le Q(t+s)$. Note that in \cite[\S~2.5]{BB19}, the authors proved that the function $f_{1,t}$ satisfies $\partial_t f_{1,t} \ge \Delta_{E_1}f_{1,t}$, which also implies \eqref{relheat} by the maximum principle for heat equations (the argument is quite similar to the one in the proof of Theorem \ref{mthm} so we omit the details).
\end{remark}

The proof of Theorem \ref{mthm} relies on the maximum principle for parabolic equations inspired by \cite{BC04, BH09}. See Lemma \ref{lem0} below for more details. We remark that in \cite{ILS24}, similar idea also works for the proof of Borell--Brascamp--Lieb inequality, which is another generalization of Pr\'ekopa--Leindler inequality.

However, to use such maximum principle, we need some regularity of the initial data. Since we should take the relation \eqref{ass3} into consideration when we do the approximation, it turns out that, instead of continuity, the semicontinuity is the suitable regularity for our purpose. And general $L^1$ functions can be approximated by upper/lower semicontinuous functions from below/above, which is guaranteed by the famous Vitali--Carath\'eodory Theorem. See Theorem \ref{TVC} below for more details.

As a corollary, from the long-time behavior of the heat equation (see Lemma \ref{lth} below), we obtain the Forward-Reverse Brascamp--Lieb constant in this geometric setting.

\begin{corollary}\label{cor1}
If $(\b{c}, \b{d}, \Q)$ is a geometric Forward-Reverse Brascamp--Lieb datum, then $C(\b{c}, \b{d}, \Q) = 1$.
\end{corollary}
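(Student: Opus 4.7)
The plan is to combine Theorem~\ref{mthm} with the long-time asymptotics of the heat kernel (the advertised Lemma~\ref{lth}) for the upper bound $C(\b{c},\b{d},\Q)\le 1$, and then exhibit explicit standard Gaussians to produce equality and conclude $C(\b{c},\b{d},\Q)\ge 1$.

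For the upper bound, fix non-negative integrable $(f_i),(g_j)$ satisfying \eqref{ass3}. Theorem~\ref{mthm} gives, for every $t>0$,
\[
\prod_{i=1}^k (e^{t\Delta_{E_i}}f_i)^{c_i}(\pi_{E_i}x) \;\le\; \prod_{j=1}^m (e^{t\Delta_{E^j}}g_j)^{d_j}(\pi_{E^j}\Q x), \qquad \forall\, x\in E_0.
\]
Evaluating at $x=0$ kills every projection, and from the explicit heat-kernel formula \eqref{exphk} one has
\[
(4\pi t)^{\dim V/2}(e^{t\Delta_V}f)(0) \;=\; \int_V f(y)e^{-|y|_V^2/(4t)}dy \;\longrightarrow\; \int_V f \qquad (t\to+\infty)
\]
by monotone convergence. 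Multiplying the heat-flow inequality at $x=0$ by $(4\pi t)^{\sum_i c_i\dim E_i/2}$ and using the dimensional balance \eqref{ass0} to match it with $(4\pi t)^{\sum_j d_j\dim E^j/2}$ on the right, we may pass to the limit $t\to+\infty$ to obtain
\[
\prod_{i=1}^k \Bigl(\int_{E_i}f_i\Bigr)^{c_i} \;\le\; \prod_{j=1}^m \Bigl(\int_{E^j}g_j\Bigr)^{d_j},
\]
which is \eqref{FRBL} with constant $1$, hence $C(\b{c},\b{d},\Q)\le 1$.

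For the lower bound, I would test the inequality on the standard Gaussians $f_i(x_i)=e^{-|x_i|_{E_i}^2}$ and $g_j(y^j)=e^{-|y^j|_{E^j}^2}$. A direct computation gives
\[
\prod_{i=1}^k f_i^{c_i}(\pi_{E_i}x)=e^{-\langle\Lambda_{\b{c}}x,x\rangle_{E_0}}, \qquad \prod_{j=1}^m g_j^{d_j}(\pi_{E^j}\Q x)=e^{-\langle\Q^*\Lambda_{\b{d}}\Q x,x\rangle_{E_0}},
\]
so the geometric hypothesis \eqref{ass2} is exactly what makes \eqref{ass3} hold. Since $\int_{E_i}f_i=\pi^{\dim E_i/2}$ and $\int_{E^j}g_j=\pi^{\dim E^j/2}$, the balance \eqref{ass0} forces $\prod_i(\int f_i)^{c_i}=\prod_j(\int g_j)^{d_j}$, so that these test functions saturate \eqref{FRBL} and force $C(\b{c},\b{d},\Q)\ge 1$.

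No real obstacle is expected here: the content of the corollary is packed into Theorem~\ref{mthm}, and what remains is a clean long-time limit plus a Gaussian computation. The only mildly delicate point is justifying the passage to the limit, but since $f\ge 0$ and $e^{-|y|^2/(4t)}\nearrow 1$, monotone convergence suffices; no uniform integrability argument is needed. Note that only the inequality $\Q^*\Lambda_{\b{d}}\Q\le\Lambda_{\b{c}}$ is used to produce the Gaussian example, while the second geometric condition (existence of $\Sigma$) enters only indirectly through Theorem~\ref{mthm}.
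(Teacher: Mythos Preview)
Your proof is correct and follows essentially the same route as the paper: Theorem~\ref{mthm} at $x=0$ combined with the long-time heat-kernel asymptotics (you invoke monotone convergence directly, the paper packages this as Lemma~\ref{lth}) gives $C\le 1$, and the standard Gaussians together with \eqref{ass2} give $C\ge 1$. Your remark that only \eqref{ass2} is needed for the Gaussian test functions, with the $\Sigma$-condition entering solely via Theorem~\ref{mthm}, is a nice observation not made explicit in the paper.
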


\medskip

Furthermore, we can show that the heat semigroup approach only works for Forward-Reverse Brascamp--Lieb datum which is essentially geometric. In other words, the heat flow preservation also gives a characterization of data which are equivalent to geometric ones. To be more precise, the following theorem holds.

\begin{theorem}\label{mthm2}
Let $(\b{c}, \b{d}, \Q)$ be a Forward-Reverse Brascamp--Lieb datum.   Then the following are equivalent:
\begin{enumerate}[(i)]
  \item The datum $(\b{c}, \b{d}, \Q)$ is equivalent to a geometric one; 

  \item There exist $A_i \in \SS(E_i), 1 \le i \le k$ and $A^j \in \SS(E^j), 1 \le j \le m$ such that if all the functions $(f_i)_{1 \le i \le k}, (g_j)_{1 \le j \le m}$ are non-negative and integrable, and \eqref{ass3} holds, namely
\[
\prod_{i = 1}^k f_i^{c_i}(\pi_{E_i} x) \le \prod_{j = 1}^m g_j^{d_j}(\pi_{E^j} \Q x), \qquad \forall \, x \in E_0,
\]
then we have
\begin{align}\label{heat3}
\prod_{i = 1}^k (e^{t\Delta_{E_i, A_i}}f_i)^{c_i}(\pi_{E_i} x) \le \prod_{j = 1}^m (e^{t\Delta_{E^j, A^j}}g_j)^{d_j}(\pi_{E^j} \Q x), \qquad \forall \, x \in E_0, t > 0;
\end{align}

  \item There exist Gaussian extremizers.
\end{enumerate}
\end{theorem}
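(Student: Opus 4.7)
Let $C_i, D_j$ realize the equivalence $\Q_{ji}' = D_j^{-1} \Q_{ji} C_i^{-1}$ with a geometric datum $\Q'$, and set $A_i := (C_i^* C_i)^{-1} \in \SS(E_i)$, $A^j := D_j D_j^* \in \SS(E^j)$. For $(f_i, g_j)$ admissible for $\Q$, define $\tilde f_i(y) := f_i(C_i^{-1} y)$ and $\tilde g_j(z) := g_j(D_j z)$; direct substitution shows $(\tilde f_i, \tilde g_j)$ is admissible for $\Q'$. Theorem \ref{mthm} gives \eqref{heat} for the tildes, and a change of variables in the convolution \eqref{conv} combined with \eqref{exphk} yields the identities $(e^{t\Delta_{E_i}} \tilde f_i)(y) = (e^{t\Delta_{E_i, A_i}} f_i)(C_i^{-1} y)$ and $(e^{t\Delta_{E^j}} \tilde g_j)(z) = (e^{t\Delta_{E^j, A^j}} g_j)(D_j z)$. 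Substituting these together with the identities $C_i^{-1} \pi_{E_i} y = \pi_{E_i} x$ and $D_j \pi_{E^j} \Q' y = \pi_{E^j} \Q x$ (valid for $x := \tilde C^{-1} y$, $\tilde C := \mathrm{diag}(C_i)$) produces \eqref{heat3}.

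\textbf{Step 2 ($(ii) \Rightarrow (iii)$).} I extract information from \eqref{heat3} in two limiting regimes. For admissible Gaussians $f_i(x) = e^{-\langle B_i x, x\rangle}$, $g_j(y) = e^{-\langle B^j y, y\rangle}$, the heat semigroup preserves the Gaussian form: $(e^{t\Delta_{V,A}} e^{-\langle Bx, x\rangle})(x) = \det(\I + 4tAB)^{-1/2} e^{-\langle B(\I+4tAB)^{-1} x, x\rangle}$. Testing \eqref{heat3} at $x = \lambda v$, dividing by $\lambda^2$ and letting first $\lambda \to \infty$ and then $t \to \infty$, yields the operator inequality
\[
\sum_i c_i \pi_{E_i}^* A_i^{-1} \pi_{E_i} \ge \Q^* \Big(\sum_j d_j \pi_{E^j}^* (A^j)^{-1} \pi_{E^j}\Big) \Q. \qquad (\ast)
\]
Separately, the heat-kernel asymptotic $(e^{t\Delta_{V, A}} h)(0) \sim \|h\|_{L^1} / ((4\pi t)^{\dim V/2} \sqrt{\det A})$ as $t \to \infty$, plugged into \eqref{heat3} at $x = 0$ together with \eqref{ass0}, gives $\prod_i \|f_i\|_{L^1}^{c_i} \le K \prod_j \|g_j\|_{L^1}^{d_j}$ valid for \emph{every} admissible $(f_i, g_j)$, with $K := \prod_i \det(A_i)^{c_i/2} / \prod_j \det(A^j)^{d_j/2}$; hence $C(\b{c}, \b{d}, \Q) \le K$. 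Now take $B_i := A_i^{-1}$, $B^j := (A^j)^{-1}$: by $(\ast)$ this Gaussian is admissible, and a direct Gaussian-integral computation (using \eqref{ass0}) gives $\prod_i (\int f_i)^{c_i} = K \prod_j (\int g_j)^{d_j}$. Thus $C(\b{c}, \b{d}, \Q) = K$ and these centred Gaussians are extremizers.

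\textbf{Step 3 ($(iii) \Rightarrow (i)$).} Given Gaussian extremizers $f_i = e^{-\langle B_i x, x\rangle}$, $g_j = e^{-\langle B^j y, y\rangle}$, set $C_i := B_i^{1/2}$, $D_j := (B^j)^{-1/2}$ and $\Q_{ji}' := D_j^{-1} \Q_{ji} C_i^{-1}$; then $(\b{c}, \b{d}, \Q')$ is equivalent to $(\b{c}, \b{d}, \Q)$, and under the change of variables from Step 1 the extremizers pull back to standard Gaussians $e^{-|\cdot|^2}$, which remain extremizers for $\Q'$. The pointwise inequality \eqref{ass3} for these standard Gaussians collapses to the first geometric condition $\Lambda_{\b{c}} \ge (\Q')^* \Lambda_{\b{d}} \Q'$. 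For the second geometric condition, I invoke the Gaussian-extremizer characterization for forward-reverse Brascamp--Lieb (see \cite{LCCV18, CL21}): the equality case of \eqref{FRBL} for the standard Gaussians dualizes, via a Lagrangian analysis on the Gaussian optimization problem, to the existence of $\Sigma \in \Pi(\I_{E_1}, \ldots, \I_{E_k})$ with $\Q' \Sigma (\Q')^* \in \Pi(\I_{E^1}, \ldots, \I_{E^m})$. Hence $\Q'$ is geometric. The main obstacle is precisely this last step: the matrix inequality \eqref{ass2} falls out immediately from admissibility of standard Gaussians, but extracting the coupling $\Sigma$ genuinely requires the equality case of \eqref{FRBL} and the dual Gaussian-optimization framework of \cite{LCCV18, CL21}.
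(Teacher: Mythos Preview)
Your proof is correct and follows essentially the same route as the paper. The only cosmetic difference is in Step~2: the paper applies the long-time asymptotic of Lemma~\ref{lth} at a general point $2\sqrt{t}\,x$ (for \emph{arbitrary} admissible $(f_i,g_j)$) to obtain simultaneously the bound $C(\b{c},\b{d},\Q)\le K$ and the operator inequality $(\ast)$, whereas you obtain $(\ast)$ separately by testing \eqref{heat3} on admissible Gaussian inputs and sending $\lambda\to\infty$ then $t\to\infty$; both arguments are valid and yield the same conclusion. In Step~3 you unpack slightly more (reducing to standard Gaussians and reading off \eqref{ass2} directly), but the existence of $\Sigma$ is, as you acknowledge, exactly the content of \cite[Theorem~1.23]{CL21}, which is what the paper cites as well.
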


The proofs of Theorem \ref{mthm} and  Theorem \ref{mthm2} will be provided in Section \ref{s2} and Section \ref{s3} respectively.

\section{Proof of Theorem \ref{mthm}}\label{s2}
\setcounter{equation}{0}

In this section we give the proof of our main theorem, Theorem \ref{mthm}. However, we split the proof into four parts. In Subsection \ref{ss21} we prove the main ingredient of the proof: the maximum principle for parabolic equations. Then in Subsection \ref{ss22}, we establish three lemmas to deal with  corresponding three assumptions of Lemma \ref{lem0}. The proof under an extra  assumption on the regularity of initial data is provided in Subsection \ref{ss23}. Finally in Subsection \ref{ss24} we end the proof with an approximation process by using the Vitali--Carath\'eodory Theorem.

\subsection{Maximum principle for parabolic equations}\label{ss21}

In this section we prove a slightly improved version of \cite[Lemma 1]{BH09}, which is closely related to the maximum principle for parabolic equations. To be more precise, we lower the regularity required for the initial datum at time $0$. Although the proof is similar, for the sake of completeness, we include the proof here.

\begin{lemma}\label{lem0}
Let $V$ be a Euclidean space and $\C \in C^2( V \times (0,+\infty))$. Assume furthermore that: 
\begin{enumerate}
\item for any point $(v_*, t_*) \in V \times (0,+\infty)$ such that 
\begin{align}\label{cases}
\begin{cases}
\C(v_*,t_*) \le 0 \\
\nabla_V \C (v_*, t_*) = 0 \\
\nabla^2_V \C(v_*,t_*) \ge 0
\end{cases},
\end{align}
we have $\partial_t \C (v_*, t_*) \ge 0$;\\
\item for every $T > 0$, the following estimate holds:
\begin{align}\label{asss1}
\liminf_{|v|_V \to +\infty} \left( \inf_{t \in (0, T]} \C(v,t) \right) \ge 0;
\end{align}
\item for every $v_0 \in V$, the following estimate holds:
\begin{align}\label{asss2}
\liminf_{v \to v_0, t \to 0^+} \C(v,t) \ge 0.
\end{align}
Here all the ``$\liminf$''s can take the value $+\infty$. 
\end{enumerate}
Then we obtain
\[
\C(v,t) \ge 0, \qquad \forall \, (v,t) \in V \times (0,+\infty).
\]
\end{lemma}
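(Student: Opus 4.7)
My plan is the classical $\varepsilon$-perturbation argument for parabolic minimum principles. I would argue by contradiction: suppose that $\C(v_0, t_0) < 0$ at some $(v_0, t_0) \in V \times (0,+\infty)$, set $\delta := -\C(v_0, t_0) > 0$, fix any $T > t_0$, and work with the perturbed function
\[
\widetilde{\C}(v,t) := \C(v,t) + \varepsilon t
\]
on the strip $V \times (0, T]$, where $\varepsilon > 0$ is chosen small enough that $\widetilde{\C}(v_0, t_0) \le -\delta/2$.

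The main step is then to confine a global minimizer of $\widetilde{\C}$ on this strip to a compact set, which is exactly what the hypotheses \eqref{asss1} and \eqref{asss2} are tailored for. First I would use \eqref{asss1} to pick a radius $R > 0$ with $v_0 \in B(0, R)$ and $\C(v,t) \ge -\delta/4$ whenever $|v|_V \ge R$, $t \in (0, T]$. Next, using \eqref{asss2} at each point of $\overline{B(0, R)}$ and extracting a finite subcover (by compactness of the closed ball), I would produce some $\tau \in (0, t_0)$ such that $\C(v,t) \ge -\delta/4$ for all $v \in \overline{B(0, R)}$ and $t \in (0, \tau]$. Off the compact box $K := \overline{B(0, R)} \times [\tau, T]$ one then has $\widetilde{\C} \ge -\delta/4$, whereas inside $K$ there is a point where $\widetilde{\C} \le -\delta/2$. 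The $C^2$ (in particular continuous) function $\widetilde{\C}$ thus attains its infimum over the whole strip $V \times (0, T]$ at some $(v_*, t_*) \in K$ with $\widetilde{\C}(v_*, t_*) \le -\delta/2$ and $t_* \ge \tau > 0$.

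Then I would simply read off the optimality conditions at $(v_*, t_*)$. Since $V$ has no boundary, $\nabla_V \widetilde{\C}(v_*, t_*) = 0$ and $\nabla_V^2 \widetilde{\C}(v_*, t_*) \ge 0$; in the time variable, either $t_* \in (\tau, T)$ giving $\partial_t \widetilde{\C}(v_*, t_*) = 0$, or $t_* = T$ giving the one-sided bound $\partial_t \widetilde{\C}(v_*, t_*) \le 0$, so in either case $\partial_t \widetilde{\C}(v_*, t_*) \le 0$. Translating back, $\C(v_*, t_*) \le -\delta/2 < 0$, $\nabla_V \C(v_*, t_*) = 0$, $\nabla_V^2 \C(v_*, t_*) \ge 0$, and $\partial_t \C(v_*, t_*) \le -\varepsilon$. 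Hypothesis (1) applied at $(v_*, t_*)$ then forces $\partial_t \C(v_*, t_*) \ge 0$, contradicting $-\varepsilon < 0$.

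The only genuinely non-mechanical step I anticipate is this compactness-and-localization argument: one has to use \eqref{asss1} to prevent the minimizing sequence from escaping to spatial infinity and \eqref{asss2} to prevent it from sliding down toward $t = 0^+$, and only then can the first- and second-order conditions be invoked at an actual point $(v_*, t_*)$. Once this localization is in place, the $+\varepsilon t$ shift is the standard device that upgrades the non-strict conclusion of hypothesis (1) into a strict contradiction.
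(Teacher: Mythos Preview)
Your proof is correct and follows essentially the same $\varepsilon$-perturbation and localization strategy as the paper's own argument; the only cosmetic difference is that you localize in time via a finite-subcover argument on $\overline{B(0,R)}$, whereas the paper extracts a convergent minimizing subsequence and rules out $t_* = 0$ directly from \eqref{asss2}. One small remark: your case split in the time variable should also cover $t_* = \tau$, but since $(v_*,t_*)$ minimizes $\widetilde{\C}$ over the whole strip $V\times(0,T]$ and $\tau$ is interior there, one still gets $\partial_t \widetilde{\C}(v_*,\tau) = 0$ as in the case $t_* \in (\tau,T)$.
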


\begin{proof}
Assume on the contrary that there is a point $(v_0,t_0) \in V \times (0,+\infty)$ such that 
\[
\C(v_0,t_0) = - 2\delta < 0.
\]
We can pick an $\epsilon > 0$ such that $\C(v_0, t_0) + \epsilon t_0 < -\delta < 0 $. Also pick a $T \in [t_0, +\infty)$. Now we consider the following function on $V \times (0,T]$:
\[
\C_\epsilon(v,t) := \C(v,t) + \epsilon t.
\]
From the construction above we know that 
\[
\inf_{(v,t) \in V \times (0,T]} \C_\epsilon(v,t) \le \C_\epsilon(v_0, t_0) < -\delta < 0.
\]
By definition, there is a sequence $\{(v_n,t_n)\}_{n \ge 1} \subset V \times (0,T]$ such that 
\[
\C_\epsilon(v_n, t_n) \to \inf_{(v,t) \in V \times (0,T]} \C_\epsilon(v,t), \qquad \mbox{as $n \to +\infty$}.
\]
It follows from \eqref{asss1} that there exists a $R > 0$ such that if $|v|_V > R$ we have
\[
\inf_{t \in (0, T]} \C_\epsilon(v,t) \ge \inf_{t \in (0, T]} \C(v,t) \ge -\delta,
\]
which implies for $n$ large enough, we must have $|v_n|_V \le R$. After passing to some subsequence, we can assume that $(v_n,t_n) \to (v_*,t_*) \in V \times [0,T]$. We claim that $t_* > 0$, otherwise we have
\[
\liminf_{v \to v_*, t \to 0^+} \C(v,t) \le  \liminf_{n \to +\infty} \C(v_n, t_n) \le \lim_{n \to +\infty} \C_\epsilon(v_n, t_n) = \inf_{(v,t) \in V \times (0,T]} \C_\epsilon(v,t) < 0,
\]
which contradicts \eqref{asss2}. Now by continuity of $\C$ we have \[\C(v_*,t_*) \le \C_\epsilon(v_*,t_*) =  \inf_{(v,t) \in V \times (0,T]} \C_\epsilon(v,t) < 0.\]
This implies that at the point $(v_*,t_*)$, we have \eqref{cases} holds and $\partial_t \C(v_*,t_*) + \epsilon \le 0$. But \eqref{cases} implies $\partial_t \C(v_*,t_*) \ge 0$ by the first assumption and this leads to a contradiction. 
\end{proof}

\subsection{Some preparation lemmas}\label{ss22}

Before giving the proof of Theorem \ref{mthm}, we make some preparations to deal with the three assumptions in Lemma \ref{lem0}.

\begin{lemma}\label{lem1}
Assume $(\b{c}, \b{d}, \Q)$ is a geometric Forward-Reverse Brascamp--Lieb datum. 
\begin{enumerate}
\item We have 
\begin{align}\label{key1}
\sum_{i = 1}^k\frac{1}{c_i} \left|\sum_{j = 1}^m d_j \Q_{ji}^* v^j\right|_{E_i}^2 \le \sum_{j = 1}^m d_j |v^j|_{E^j}^2, \qquad \forall \, v^j \in E^j, 1 \le j \le m.
\end{align}
\item If  $X_i \in \SSS(E_i), 1 \le i \le k$ and $Y_j \in \SSS(E^j), 1 \le j \le m$ satisfying
\begin{align}\label{key2}
\sum_{i = 1}^k c_i \pi_{E_i}^* X_i \pi_{E_i} \le \Q^* \left( \sum_{j = 1}^m d_j \pi_{E^j}^* Y_j \pi_{E^j} \right) \Q,
\end{align}
then it holds that 
\begin{align}\label{key3}
\sum_{i = 1}^k c_i \tr(X_i) \le \sum_{j = 1}^m d_j \tr(Y_j).
\end{align}
\end{enumerate}
\end{lemma}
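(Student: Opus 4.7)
The plan is to derive both parts directly from the two defining conditions of a geometric datum: the operator inequality $\Q^* \Lambda_{\b{d}} \Q \le \Lambda_{\b{c}}$ and the existence of $\Sigma \in \Pi(\I_{E_1}, \ldots, \I_{E_k})$ with $\Q \Sigma \Q^* \in \Pi(\I_{E^1}, \ldots, \I_{E^m})$. Everything else will be bookkeeping with the block decompositions from \eqref{iden}.

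For part (1), I would first rewrite \eqref{key1} in intrinsic operator form. Setting $v := \sum_{j=1}^m \pi_{E^j}^* v^j \in E^0$, the identities $\Q_{ji}^* = \pi_{E_i} \Q^* \pi_{E^j}^*$ and $\pi_{E^j} \Lambda_{\b{d}} = d_j \pi_{E^j}$ give $\sum_j d_j \Q_{ji}^* v^j = \pi_{E_i} \Q^* \Lambda_{\b{d}} v$. Combined with $\Lambda_{\b{c}}^{-1} = \sum_i c_i^{-1} \pi_{E_i}^* \pi_{E_i}$, the left-hand side of \eqref{key1} becomes $\langle \Lambda_{\b{d}} v, \Q \Lambda_{\b{c}}^{-1} \Q^* \Lambda_{\b{d}} v \rangle_{E^0}$, while the right-hand side is $\langle v, \Lambda_{\b{d}} v \rangle_{E^0}$. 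Hence \eqref{key1} is equivalent to $\Lambda_{\b{d}}^{1/2} \Q \Lambda_{\b{c}}^{-1} \Q^* \Lambda_{\b{d}}^{1/2} \le \I_{E^0}$. On the other hand, conjugating \eqref{ass2} by $\Lambda_{\b{c}}^{-1/2}$ yields $\Lambda_{\b{c}}^{-1/2} \Q^* \Lambda_{\b{d}} \Q \Lambda_{\b{c}}^{-1/2} \le \I_{E_0}$. Setting $M := \Lambda_{\b{d}}^{1/2} \Q \Lambda_{\b{c}}^{-1/2}$, the target reads $MM^* \le \I_{E^0}$ and the hypothesis reads $M^*M \le \I_{E_0}$; since both are equivalent to $\|M\| \le 1$, the conclusion follows.

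For part (2), I would pair the operator inequality \eqref{key2} with the positive semidefinite matrix $\Sigma$ and take traces. Since $\Sigma \ge 0$, for self-adjoint $A \le B$ on $E_0$ one has $\tr(\Sigma A) \le \tr(\Sigma B)$. Applying this to \eqref{key2} and using cyclicity of the trace, the left-hand side becomes $\sum_i c_i \tr(\pi_{E_i} \Sigma \pi_{E_i}^* X_i) = \sum_i c_i \tr(X_i)$ thanks to $\pi_{E_i} \Sigma \pi_{E_i}^* = \I_{E_i}$, while the right-hand side becomes $\sum_j d_j \tr(\pi_{E^j} \Q \Sigma \Q^* \pi_{E^j}^* Y_j) = \sum_j d_j \tr(Y_j)$ thanks to $\pi_{E^j} \Q \Sigma \Q^* \pi_{E^j}^* = \I_{E^j}$, yielding \eqref{key3}.

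The only genuinely conceptual step is the $M^*M$ vs.\ $MM^*$ duality underlying part (1), which reveals that the apparent asymmetry between \eqref{ass2} and \eqref{key1} is illusory; the rest, including part (2), is a routine trace computation once the correct test matrix $\Sigma$ is recognized. I therefore do not anticipate a serious obstacle.
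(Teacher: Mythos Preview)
Your proposal is correct and follows essentially the same approach as the paper: for part (1) the paper likewise reduces \eqref{key1} to $A^*A \le \I_{E^0}$ with $A := \Lambda_{\b{c}}^{-1/2}\Q^*\Lambda_{\b{d}}^{1/2}$ (your $M^*$) and invokes the $\|A\|=\|A^*\|$ duality from \eqref{ass2}; for part (2) the paper conjugates \eqref{key2} by $\sqrt{\Sigma}$ before taking traces, which is exactly your step $\tr(\Sigma A)\le\tr(\Sigma B)$ unpacked. The only cosmetic difference is that the paper writes everything in block-matrix form whereas you phrase it in projection/operator language.
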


\begin{proof}
With the identification \ref{iden},
if we write \eqref{key1} in the language of matrix, to prove \eqref{key1} is equivalent to proving  the following:
\[
\Lambda_{\b{d}} \Q \Lambda_{\b{c}}^{-1} \Q^* \Lambda_{\b{d}} \le \Lambda_{\b{d}},
\]
which is equivalent to $A^*A \le \I_{E^0}$, where $A := \Lambda_{\b{c}}^{-\frac{1}{2}} \Q^*\Lambda_{\b{d}}^{\frac{1}{2}}$. On the other hand, from \eqref{ass2} we have $A A^* \le \I_{E_0}$, which is equivalent to saying that $\|A^*\|_{E_0 \to E^0} \le 1$, and thus we have $\|A\|_{E^0 \to E_0} \le 1$, which implies $A^*A \le \I_{E^0}$. This ends the proof of  the first part. For the second part, in the same spirit we write \eqref{key2} as
\[
\begin{pmatrix}
c_1 X_1 & 0 & \ldots & 0 \\
0 & c_2 X_2 &  \ldots & 0\\
\vdots & \vdots & \ddots & \vdots \\
0 & 0 & \ldots & c_k X_k
\end{pmatrix} \le \Q^* \begin{pmatrix}
d_1 Y_1 & 0 & \ldots & 0  \\
0 & d_2 Y_2 &  \ldots & 0 \\
\vdots & \vdots & \ddots & \vdots \\
0 & 0 & \ldots & d_m Y_m
\end{pmatrix} \Q
\]
 This implies
\[
\sqrt{\Sigma}\begin{pmatrix}
c_1 X_1 & 0 & \ldots & 0 \\
0 & c_2 X_2 &  \ldots & 0\\
\vdots & \vdots & \ddots & \vdots \\
0 & 0 & \ldots & c_k X_k
\end{pmatrix} \sqrt{\Sigma} \le \sqrt{\Sigma} \Q^* \begin{pmatrix}
d_1 Y_1 & 0 & \ldots & 0  \\
0 & d_2 Y_2 &  \ldots & 0 \\
\vdots & \vdots & \ddots & \vdots \\
0 & 0 & \ldots & d_m Y_m
\end{pmatrix} \Q \sqrt{\Sigma},
\]
where $\Sigma$ is the matrix appearing in the definition of geometric Forward-Reverse Brascamp--Lieb datum. Taking trace on both sides and using the property that $\tr(\mathbf{A}\mathbf{B}) = \tr(\mathbf{B}\mathbf{A})$ for suitable linear maps $\mathbf{A}, \mathbf{B}$, we obtain 
\[
\tr\left( \Sigma\begin{pmatrix}
c_1 X_1 & 0 & \ldots & 0 \\
0 & c_2 X_2 &  \ldots & 0\\
\vdots & \vdots & \ddots & \vdots \\
0 & 0 & \ldots & c_k X_k
\end{pmatrix}  \right) \le \tr\left( \Q \Sigma \Q^* \begin{pmatrix}
d_1 Y_1 & 0 & \ldots & 0  \\
0 & d_2 Y_2 &  \ldots & 0 \\
\vdots & \vdots & \ddots & \vdots \\
0 & 0 & \ldots & d_m Y_m
\end{pmatrix} \right).
\]
Since $(\b{c}, \b{d}, \Q)$ is geometric, the matrices have the following block forms:
\begin{align*}
\Sigma\begin{pmatrix}
c_1 X_1 & 0 & \ldots & 0 \\
0 & c_2 X_2 &  \ldots & 0\\
\vdots & \vdots & \ddots & \vdots \\
0 & 0 & \ldots & c_k X_k
\end{pmatrix} &= \begin{pmatrix}
c_1 X_1 & * & \ldots & * \\
* & c_2 X_2 &  \ldots & *\\
\vdots & \vdots & \ddots & \vdots \\
* & * & \ldots & c_k X_k
\end{pmatrix} \\
\Q \Sigma \Q^* \begin{pmatrix}
d_1 Y_1 & 0 & \ldots & 0  \\
0 & d_2 Y_2 &  \ldots & 0 \\
\vdots & \vdots & \ddots & \vdots \\
0 & 0 & \ldots & d_m Y_m
\end{pmatrix} &= \begin{pmatrix}
d_1 Y_1 & * & \ldots & *  \\
* & d_2 Y_2 &  \ldots & * \\
\vdots & \vdots & \ddots & \vdots \\
* & * & \ldots & d_m Y_m
\end{pmatrix}.
\end{align*}
So the trace inequality yields \eqref{key3}. Thus we prove the lemma.
\end{proof}

\begin{lemma}\label{lem2}
Assume $f$ is a bounded function with compact support on some Euclidean space $V$. Then  we have
\[
\lim_{|x|_V \to +\infty}\sup_{t \in (0,+\infty)}|(e^{t\Delta_V} f)(x) | = 0.
\]
\end{lemma}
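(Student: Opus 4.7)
The plan is to push the supremum over $t$ inside the convolution integral defining $e^{t \Delta_V} f$, which reduces the question to a pointwise bound on the heat kernel. Let $M$ be a uniform upper bound for $|f|$ and pick $R > 0$ so that $f$ vanishes outside the ball $\{y \in V : |y|_V < R\}$. Using \eqref{conv} and non-negativity of $p_t^V$ gives
\[
\sup_{t > 0} |(e^{t \Delta_V} f)(x)| \;\le\; \int_V |f(y)| \sup_{t > 0} p_t^V(x - y) \, dy \;\le\; M \int_{|y|_V < R} \sup_{t > 0} p_t^V(x - y) \, dy.
\]

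The key computation is then to evaluate $\sup_{t > 0} p_t^V(x - y)$ explicitly. Setting $n := \mathrm{dim}\, V$ and $u := |x - y|_V^2 > 0$, the one-variable function $t \mapsto t^{-n/2} e^{-u/(4t)}$ is maximized at $t_* := u/(2n)$ (found by differentiating its logarithm), and plugging back into \eqref{exphk} yields
\[
\sup_{t > 0} p_t^V(x - y) \;=\; \frac{C_n}{|x - y|_V^n}, \qquad x \neq y,
\]
for the dimensional constant $C_n := (n/(2\pi))^{n/2} e^{-n/2}$. This polynomial decay is much weaker than the Gaussian decay at any fixed $t$, but it is the correct object here since the optimal $t$ depends on $|x-y|_V$; no single $t$-dependent bound will be sharp uniformly in $t$.

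To conclude, restrict to $|x|_V \ge 2 R$; then for any $y$ with $|y|_V < R$ one has $|x - y|_V \ge |x|_V / 2$, so combining the two displays gives
\[
\sup_{t > 0} |(e^{t \Delta_V} f)(x)| \;\le\; M\, C_n \int_{|y|_V < R} \frac{dy}{|x - y|_V^n} \;\le\; \frac{M\, C_n\, 2^n \, \mathrm{vol}(\{|y|_V < R\})}{|x|_V^n},
\]
which tends to $0$ as $|x|_V \to +\infty$. The degenerate case $n = 0$ is vacuous since then $V = \{0\}$. I do not anticipate any real obstacle; the only minor technical point is the exchange $\sup_t \int \le \int \sup_t$, which is immediate from positivity of $p_t^V$ and the triangle inequality for integrals.
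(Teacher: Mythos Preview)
Your proof is correct and follows essentially the same approach as the paper: both bound the convolution by $M$ times an integral over the support, use $|x-y|_V \ge |x|_V/2$ when $|x|_V \ge 2R$, and optimize $t \mapsto t^{-n/2} e^{-c|x|_V^2/t}$ to extract the $|x|_V^{-n}$ decay. The only cosmetic difference is ordering---you take the supremum over $t$ pointwise before integrating, whereas the paper first replaces $|x-y|_V$ by $|x|_V/2$ inside the integral and then optimizes in $t$.
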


\begin{proof}
Assume $|f| \le M$ and the support of $f$ is contained in $K := \{v: \, |v|_V \le R \}$. Then combining \eqref{conv} with \eqref{exphk}, we obtain
\[
|(e^{t\Delta_V} f)(x) | \le \int_V |f(y)| p_t^{V}(x - y) dy \le \frac{M}{(4\pi t)^{\frac{\mathrm{dim} V }{2}}} \int_K     e^{-\frac{|x- y|^2_V}{4t}} dy.
\]
Note that when $|x|_V \ge 2R$ and $y \in K$, we have $|x - y|_V \ge |x|_V - |y|_V \ge |x|_V - R \ge \frac{|x|_V}{2}$. Thus the last term can be bounded by
\[
\frac{C(M,R,\mathrm{dim} V )}{t^{\frac{\mathrm{dim} V }{2}}} e^{-\frac{|x|_V^2}{16t}} \le \frac{C'(M,R,\mathrm{dim} V )}{|x|_V^{\mathrm{dim} V}} \to 0
\]
as $|x|_V \to +\infty$. Since the last term is independent of $t$, we can take supremum w.r.t. $t > 0$.
\end{proof}

\begin{lemma}\label{lem3}
Assume $f$ and $g$ are integrable functions on some Euclidean space $V$.  
\begin{enumerate}
\item If $f$ is upper semicontinuous and bounded from above, then for every $x_0 \in V$, we have
\[
\limsup_{x \to x_0, t \to 0^+} (e^{t\Delta_V} f)(x) \le f(x_0).
\] 
\item If $g$ is lower semicontinuous and bounded from below, then for every $x_0 \in V$, we have
\[
\liminf_{x \to x_0, t \to 0^+} (e^{t\Delta_V} g)(x) \ge g(x_0).
\] 
\end{enumerate}
\end{lemma}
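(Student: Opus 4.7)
The plan is first to note that the second assertion follows from the first by linearity of the heat semigroup: applying part (1) to $-g$ (which is upper semicontinuous and bounded from above whenever $g$ is lower semicontinuous and bounded from below) converts the desired $\liminf$ inequality for $g$ into the $\limsup$ inequality for $-g$. So it suffices to prove part (1).

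For part (1), my approach is the standard Dirac-sequence argument. The heat kernel $p_t^V$ given by \eqref{exphk} is a probability density that concentrates at the origin as $t \to 0^+$: by the rescaling $y = \sqrt{t}\,z$, for every $\delta > 0$,
\[
\int_{|y|_V > \delta} p_t^V(y)\, dy \longrightarrow 0 \quad \text{as } t \to 0^+.
\]
I would fix $x_0 \in V$ and $\epsilon > 0$, use upper semicontinuity of $f$ at $x_0$ to pick $\delta > 0$ with $f(y) \le f(x_0) + \epsilon$ for $|y - x_0|_V \le \delta$, and set $M := \sup_V f < +\infty$ together with $M^+ := \max(M, 0)$. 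Using \eqref{conv}, I would then split $(e^{t\Delta_V} f)(x) = I_1(x,t) + I_2(x,t)$ into the integrals over $\{|y - x_0|_V \le \delta\}$ and its complement. Writing $\eta(x,t) := \int_{|y - x_0|_V > \delta} p_t^V(x-y)\, dy$, the semicontinuity bound gives $I_1(x,t) \le (f(x_0) + \epsilon)(1 - \eta(x,t))$ (pointwise since $p_t^V \ge 0$), and dropping the negative part of $f$ yields $I_2(x,t) \le M^+ \eta(x,t)$. For $|x - x_0|_V < \delta/2$, the triangle inequality gives $\{|y - x_0|_V > \delta\} \subset \{|y - x|_V > \delta/2\}$, hence $\eta(x,t) \le \int_{|z|_V > \delta/2} p_t^V(z)\, dz \to 0$ as $t \to 0^+$. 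Combining the two bounds, $\limsup_{(x,t) \to (x_0, 0^+)} (e^{t\Delta_V} f)(x) \le f(x_0) + \epsilon$, and letting $\epsilon \to 0^+$ closes the argument.

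I do not anticipate a serious obstacle; the only mild technical point is in bounding $I_2$, since $f$ is only assumed integrable, not bounded from below, so $|f|$ admits no uniform bound. The key observation bypassing this is that for an \emph{upper} bound the negative part of $f$ merely lowers the integrand and may be discarded, so it is enough to control $f^+ \le M^+$ there.
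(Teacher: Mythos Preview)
Your argument is correct and is essentially the paper's own proof: both fix $x_0$, use upper semicontinuity to bound $f$ on a small ball, split the convolution into a near part and a far part, and control the far part via the tail of the heat kernel and the upper bound $M$ (your $M^+$) on $f$. The only cosmetic differences are that you deduce part (2) from part (1) via $g\mapsto -g$ (the paper simply says the proof is similar) and you split the integral in the $y$-variable centered at $x_0$ rather than in the kernel variable centered at $0$; these are equivalent formulations of the same Dirac-sequence estimate.
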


\begin{proof}
It suffices to prove the first assertion since the proof for the second one is quite similar. In this proof we use $B(x,r)$ to denote the ball in $V$ centred at $x$ with radius $r$. Now fix a point $x_0 \in V$. For every $\epsilon > 0$, by the upper semicontinuity of $f$ at $x_0$, there exists a $\delta > 0$ such that 
\[
f(z) \le f(x_0) + \frac{\epsilon}{2}, \qquad \forall \, z \in B(x_0,2 \delta).
\]
Then from \eqref{conv}, we have
\begin{align}\label{Delta1}
(e^{t\Delta_V} f)(x) = \int_V f(x - y) p_t^{V}(y) dy = \int_{B(0,\delta)} f(x - y) p_t^{V}(y) dy 
+  \int_{B(0,\delta)^c} f(x - y) p_t^{V}(y) dy.
\end{align}
For the first term, if $x \in B(x_0, \delta)$ and $y \in B(0,\delta)$, we have $x - y \in B(x_0,2 \delta)$ and thus 
\begin{align}\label{Delta2}
\int_{B(0,\delta)} f(x - y) p_t^{V}(y) dy \le \left( f(x_0) + \frac{\epsilon}{2} \right) \int_{B(0,\delta)} p_t^{V}(y) dy = f(x_0) + \frac{\epsilon}{2} - \left( f(x_0) + \frac{\epsilon}{2} \right) \int_{B(0,\delta)^c} p_t^{V}(y) dy.
\end{align}
Now for the second terms in \eqref{Delta1} and \eqref{Delta2}, by \eqref{exphk} and a change of variables, we have
\begin{align*}
\left| \left( f(x_0) + \frac{\epsilon}{2} \right) \int_{B(0,\delta)^c} p_t^{V}(y) dy \right|  &\le  \frac{\left| f(x_0) + \frac{\epsilon}{2} \right|}{(4\pi t)^{\frac{\mathrm{dim} V }{2}}} \int_{B(0,\delta)^c}  e^{-\frac{| y|^2_V}{4t}} dy = \left| f(x_0) + \frac{\epsilon}{2} \right|  \int_{B(0,\delta/\sqrt{4\pi t})^c}  e^{-\pi| z|^2_V} dz,\\
\int_{B(0,\delta)^c} f(x - y) p_t^{V}(y) dy &\le \frac{M}{(4\pi t)^{\frac{\mathrm{dim} V }{2}}} \int_{B(0,\delta)^c}  e^{-\frac{| y|^2_V}{4t}} dy =  M \int_{B(0,\delta/\sqrt{4\pi t})^c}  e^{-\pi| z|^2_V} dz,
\end{align*}
where $M > 0$ is an upper bound of $f$. The last terms of the two estimates above can be made sufficient small as $t \to 0^+$ by the dominated convergence theorem. So there exists a $\eta > 0$ such that when $t \in (0, \eta)$,
\[
- \left( f(x_0) + \frac{\epsilon}{2} \right) \int_{B(0,\delta)^c} p_t^{V}(y) dy + \int_{B(0,\delta)^c} f(x - y) p_t^{V}(y) dy \le \frac{\epsilon}{2}.
\]
Combining the estimates of all the terms together, we conclude that for every $\epsilon > 0$ given, there exist $\delta > 0$ and $\eta > 0$ such that when $x \in B(x_0, \delta)$ and $t \in (0, \eta)$, we obtain
\[
(e^{t\Delta_V} f)(x) \le f(x_0) + \epsilon.
\]
\end{proof}

\subsection{The proof under suitable regularity}\label{ss23}

The target of this subsection is to prove the following proposition, which is a special case of Theorem \ref{mthm}.

\begin{proposition}\label{p1}
Let $(\b{c}, \b{d}, \Q)$ be a geometric Forward-Reverse Brascamp--Lieb datum. Assume that functions $(f_i)_{1 \le i \le k}$ are non-negative,  upper semicontinuous, with compact support, and functions $(g_j)_{1 \le j \le m}$ are non-negative, lower semicontinuous. Then if \eqref{ass3} holds, namely
\[
\prod_{i = 1}^k f_i^{c_i}(\pi_{E_i} x) \le \prod_{j = 1}^m g_j^{d_j}(\pi_{E^j} \Q x), \qquad \forall \, x \in E_0,
\]
we have
\begin{align}\label{heat2}
\prod_{i = 1}^k (e^{t\Delta_{E_i}}f_i)^{c_i}(\pi_{E_i} x) \le \prod_{j = 1}^m (e^{t\Delta_{E^j}}g_j)^{d_j}(\pi_{E^j} \Q x), \qquad \forall \, x \in E_0, t > 0.
\end{align}
\end{proposition}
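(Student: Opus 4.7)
The plan is to apply the parabolic maximum principle Lemma~\ref{lem0} to the function
\[
\C(x,t) := \log G(x,t) - \log F(x,t),
\]
where $F(x,t) := \prod_{i=1}^k u_i^{c_i}(\pi_{E_i}x,t)$, $G(x,t) := \prod_{j=1}^m v_j^{d_j}(\pi_{E^j}\Q x,t)$, with $u_i := e^{t\Delta_{E_i}}f_i$ and $v_j := e^{t\Delta_{E^j}}g_j$. After reducing to the case where each $f_i$ is nontrivial and each $v_j$ is finite and positive (otherwise \eqref{heat2} is either vacuous or obtained by a later approximation), $\C$ is $C^2$ on $E_0\times(0,+\infty)$ and \eqref{heat2} becomes $\C\ge 0$. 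It then suffices to verify the three hypotheses of Lemma~\ref{lem0}.

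Hypothesis~(1) is the crux. Set $U_i:=\log u_i$, $V_j:=\log v_j$, $\alpha_i:=\nabla_{E_i}U_i(\pi_{E_i}x_*,t_*)$, $\beta_j:=\nabla_{E^j}V_j(\pi_{E^j}\Q x_*,t_*)$. The identity $\partial_t\log u=\Delta\log u+|\nabla\log u|^2$, applied to $u_i$ and $v_j$, gives
\[
\partial_t\C(x_*,t_*) \;=\; \sum_{j=1}^m d_j\bigl(\Delta_{E^j}V_j+|\beta_j|^2_{E^j}\bigr) \;-\; \sum_{i=1}^k c_i\bigl(\Delta_{E_i}U_i+|\alpha_i|^2_{E_i}\bigr).
\]
Chain-rule computations using \eqref{iden}--\eqref{maQ} produce $\nabla_{E_0}\log F=\sum_i c_i\pi_{E_i}^*\alpha_i$, $\nabla_{E_0}\log G=\Q^*\sum_j d_j\pi_{E^j}^*\beta_j$, together with $\nabla^2_{E_0}\log F=\sum_i c_i\pi_{E_i}^*(\nabla^2_{E_i}U_i)\pi_{E_i}$ and $\nabla^2_{E_0}\log G=\Q^*\bigl(\sum_j d_j\pi_{E^j}^*(\nabla^2_{E^j}V_j)\pi_{E^j}\bigr)\Q$. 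The condition $\nabla_{E_0}\C(x_*,t_*)=0$ then reads componentwise $c_i\alpha_i=\sum_j d_j\Q_{ji}^*\beta_j$; substituting into $\sum_i c_i|\alpha_i|^2=\sum_i c_i^{-1}\bigl|\sum_j d_j\Q_{ji}^*\beta_j\bigr|^2$ and invoking Lemma~\ref{lem1}(1) with $v^j=\beta_j$ yields $\sum_i c_i|\alpha_i|^2\le\sum_j d_j|\beta_j|^2$. The Hessian condition $\nabla^2_{E_0}\C(x_*,t_*)\ge 0$ is exactly \eqref{key2} with $X_i=\nabla^2_{E_i}U_i$ and $Y_j=\nabla^2_{E^j}V_j$, so Lemma~\ref{lem1}(2) gives $\sum_i c_i\Delta_{E_i}U_i\le\sum_j d_j\Delta_{E^j}V_j$. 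Adding the two inequalities delivers $\partial_t\C\ge 0$, with the sign condition $\C\le 0$ playing no role.

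Hypothesis~(3) follows directly from Lemma~\ref{lem3}: upper semicontinuity of $f_i$ forces $\limsup u_i\le f_i(\pi_{E_i}x_0)$ hence $\limsup F\le\prod_i f_i^{c_i}(\pi_{E_i}x_0)$, lower semicontinuity of $g_j$ gives the dual $\liminf G\ge\prod_j g_j^{d_j}(\pi_{E^j}\Q x_0)$, and \eqref{ass3} then yields $\liminf\C\ge 0$.

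Hypothesis~(2) is the main obstacle I foresee, because both $\log F$ and $\log G$ diverge to $-\infty$ at spatial infinity, so one has to match their leading Gaussian decays. Sharpening the proof of Lemma~\ref{lem2} via the explicit heat-kernel formula \eqref{exphk} gives $\log u_i(y,t)\le -|y|_{E_i}^2/(4t)+O(|y|_{E_i}/t)$ as $|y|_{E_i}\to\infty$, uniformly in $t\in(0,T]$; lower semicontinuity produces a ball on which $g_j$ is bounded below by a positive constant, and convolving with the heat kernel yields the matching bound $\log v_j(y,t)\ge -|y|_{E^j}^2/(4t)-O(|y|_{E^j}/t)$. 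Subtracting and invoking the geometric condition \eqref{ass2} gives
\[
\C(x,t) \;\ge\; \frac{\langle(\Lambda_{\b{c}}-\Q^*\Lambda_{\b{d}}\Q)x,x\rangle_{E_0}}{4t} - O(|x|_{E_0}/t),
\]
so $\C\to+\infty$ along directions where $\Lambda_{\b{c}}-\Q^*\Lambda_{\b{d}}\Q$ is strictly positive; any null directions of this quadratic form require a finer matching of subleading terms, or equivalently a reduction to a lower-dimensional sub-problem along them.
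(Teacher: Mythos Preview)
Your treatment of hypothesis~(1) is correct and matches the paper's argument. The genuine gap lies in hypothesis~(2), with a secondary issue in~(3). For~(2), matching the leading Gaussian decays via the quadratic form $\Lambda_{\b c}-\Q^*\Lambda_{\b d}\Q$ cannot work in general: when $k=1$ and $\b c=(1)$ (the classical Brascamp--Lieb case), Subsection~\ref{ss13} shows this form vanishes identically, so \emph{every} direction is null and your proposed ``finer matching of subleading terms'' would have to carry the entire argument. Even in other cases, the uniform lower bound $\log v_j(y,t)\ge -|y|_{E^j}^2/(4t)-O(|y|_{E^j}/t)$ for $t\in(0,T]$ is not obtainable from lower semicontinuity of $g_j$ alone: if $g_j$ vanishes on a neighbourhood of $y$, then $e^{t\Delta_{E^j}}g_j(y)$ is exponentially small with no usable control as $t\to 0^+$. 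For~(3), whenever some $g_j(\pi_{E^j}\Q x_0)=0$, the hypothesis \eqref{ass3} forces some $f_i(\pi_{E_i}x_0)=0$ as well, so both $\log G$ and $\log F$ tend to $-\infty$ and $\liminf\C$ is the indeterminate $(-\infty)-(-\infty)$, which Lemma~\ref{lem3} by itself does not resolve.

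The paper dissolves both difficulties with a single device you are missing: work with $g_j^\delta:=g_j+\delta$ for a fixed $\delta>0$. Then $e^{t\Delta_{E^j}}g_j^\delta=e^{t\Delta_{E^j}}g_j+\delta\ge\delta$, so $V_j^\delta:=\log(e^{t\Delta_{E^j}}g_j^\delta)\ge\log\delta$ globally. Hypothesis~(2) now follows from Lemma~\ref{lem2} alone applied to the compactly supported $f_i$ (each $U_i$ is bounded above and at least one $U_i\to-\infty$ as $|x|_{E_0}\to\infty$, uniformly in $t$, while $\sum_j d_j V_j^\delta$ stays bounded below), and hypothesis~(3) follows from Lemma~\ref{lem3} since now $\log g_j^\delta(\pi_{E^j}\Q x_0)$ is finite and \eqref{ass3} persists with $g_j^\delta\ge g_j$. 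The heat-equation identity $\partial_t V_j^\delta=\Delta V_j^\delta+|\nabla V_j^\delta|^2$ is unchanged, so your verification of~(1) goes through verbatim. One then lets $\delta\to 0^+$ at the end.
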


\begin{proof}
For any positive number $\delta > 0$, we define $u_i := \log(e^{t\Delta_{E_i}}f_i), 1 \le i \le k, g_{j}^\delta := g_j + \delta, v_j^\delta := \log(e^{t\Delta_{E^j}}g_j^\delta), 1 \le j \le m$.
Since $e^{t\Delta_{E^j}}g_j^\delta = e^{t\Delta_{E^j}}g_j + \delta$, by definition we know that all the functions are smooth and
\begin{align}\label{ing1}
\partial_t u_i &=  \Delta_{E_i} u_i + |\nabla_{E_i} u_i|_{E_i}^2, \quad \forall \, 1 \le i \le k,\\
\label{ing2}
\partial_t v_j^\delta &=  \Delta_{E^j} v_j^\delta + |\nabla_{E^j} v_j^\delta|_{E^j}^2, \quad \forall \, 1 \le j \le m. 
\end{align}

Now on the space $E_0 \times (0,+\infty)$ we construct
\[
\C(x,t) := \sum_{j = 1}^m d_j v_j^\delta(\pi_{E^j} \Q x, t) - \sum_{i = 1}^k c_i u_i(\pi_{E_i} x,t).
\]

We now show that this $\C$ satisfies all the three assumptions of Lemma \ref{lem0}. Assume at the point $(x_*,t_*)$ we have $\nabla_{E_0} \C (x_*,t_*) = 0$ and $\nabla^2_{E_0} \C (x_*,t_*) \ge 0$. From $\nabla_{E_0} \C (x_*,t_*) = 0$ we obtain that 
\begin{align}\label{ing5}
 c_i\nabla_{E_i} u_i (\pi_{E_i} x_*,t_*) =  \sum_{j = 1}^m d_j \Q_{ji}^* \nabla_{E^j} v_j^\delta(\pi_{E^j} \Q x_*, t_*), \quad \forall \, 1 \le i \le k.
\end{align}
Using \eqref{key1} with $v^j = \nabla_{E^j} v_j^\delta(\pi_{E^j} \Q x_*, t_*), 1 \le j \le m$, we have 
\begin{align}\label{ing4}
\sum_{i = 1}^k  c_i |\nabla_{E_i} u_i (\pi_{E_i} x_*,t_*) |_{E_i}^2 \le  \sum_{j = 1}^m d_j  | \nabla_{E^j} v_j^\delta(\pi_{E^j} \Q x_*, t_*)|_{E^j}^2.
\end{align}

From $\nabla^2_{E_0} \C (x_*,t_*) \ge 0$ we know that \eqref{key2} holds with  $X_i = \nabla^2_{E_i} u_i  (\pi_{E_i} x_*,t_*),  1 \le i \le k$, and $Y_j = \nabla^2_{E^j} v_j^\delta(\pi_{E^j} \Q x_*, t_*), 1 \le j \le m$. It follows from \eqref{key3} that
\begin{align}\label{ing3}
\sum_{i = 1}^k c_i \Delta_{E_i} u_i (\pi_{E_i} x_*,t_*) \le \sum_{j = 1}^m d_j \Delta_{E^j} v_j^\delta(\pi_{E^j} \Q x_*, t_*).
\end{align}

Combining \eqref{ing1}-\eqref{ing2} with \eqref{ing4}-\eqref{ing3}, we have
\begin{align*}
&\partial_t \C(x_*,t_*) = \sum_{j = 1}^m d_j \partial_t  v_j^\delta(\pi_{E^j} \Q x_*, t_*) - \sum_{i = 1}^k c_i  \partial_t u_i(\pi_{E_i} x_*,t_*) \\
= \, & \sum_{j = 1}^m d_j (\Delta_{E^j} v_j^\delta(\pi_{E^j} \Q x_*, t_*) + |\nabla_{E^j} v_j^\delta(\pi_{E^j} \Q x_*, t_*)|_{E^j}^2) \\
- \, & \sum_{i = 1}^k c_i (\Delta_{E_i} u_i(\pi_{E_i} x_*,t_*) + |\nabla_{E_i} u_i (\pi_{E_i} x_*,t_*)|_{E_i}^2) \ge 0,
\end{align*}
which shows the first assumption of Lemma \ref{lem0} holds. Now Lemma \ref{lem2} implies 
\[
\liminf_{|x|_{E_0} \to +\infty}\sup_{t \in (0,+\infty)}\sum_{i = 1}^k c_i u_i(\pi_{E_i} x,t) \to -\infty.
\]
Together with the fact that $(v_j^\delta)_{1 \le j \le m}$ are bounded from below by construction, it is easy to show that the second assumption of Lemma \ref{lem0} holds. Finally for the third assumption of Lemma \ref{lem0}, by Lemma \ref{lem3}, for any $x_0 \in E_0$ we have
\[
\liminf_{x \to x_0, t \to 0^+} \C(x,t) \ge \sum_{j = 1}^m d_j \log(g_j^\delta) (\pi_{E^j} \Q x_0) - \sum_{i = 1}^k c_i \log(f_i) (\pi_{E_i} x_0) \ge 0.
\]
Thus Lemma \ref{lem0} applies to the function $\C$ here and we obtain that 
\[
\sum_{j = 1}^m d_j v_j^\delta(\pi_{E^j} \Q x, t) - \sum_{i = 1}^k c_i u_i(\pi_{E_i} x,t) \ge 0.
\]
Since $\delta > 0$ is arbitrary, letting $\delta \to 0^+$, we prove this proposition.

\end{proof}

\subsection{Approximation process}\label{ss24}

In this subsection we remove the regularity assumptions on functions $(f_i)_{1 \le i \le k}, (g_j)_{1 \le j \le m}$ in Proposition \ref{p1} by using the famous Vitali--Carath\'eodory Theorem, which states that any integrable function can be approximated from above by lower semicontinuous functions and from below by upper semicontinuous functions. We refer to \cite[\S~2.25]{R87} for the proof. 

\begin{theorem}[Vitali--Carath\'eodory Theorem]\label{TVC}
Given a Euclidean space $V$ and a function $f \in L^1(V)$, for every $\epsilon > 0$, there exist two functions $u$ and $v$, such that $u \le f \le v$, $u$ is upper semicontinuous and bounded from above, $v$ is lower semicontinuous and bounded from below, and 
\[
\int_V (v - u) \le \epsilon.
\]
Furthermore, if $f \ge 0$, we can pick $u$ in a way such that $u \ge 0$ and $u$ has compact support.
\end{theorem}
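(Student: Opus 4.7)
My plan is to prove the Vitali--Carath\'eodory theorem by reducing to the case $f \ge 0$, writing $f$ as a countable nonnegative combination of indicator functions, and sandwiching each indicator between a compact subset and an open superset via inner/outer regularity of Lebesgue measure. The only delicate point is the tail control, which forces $u$ to be a finite sum while $v$ must be an infinite one.

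First I would handle the general case via $f = f^+ - f^-$. If for each of $f^\pm \ge 0$ we can find a nonnegative compactly supported upper semicontinuous $u^\pm$ and a nonnegative lower semicontinuous $v^\pm$ with $u^\pm \le f^\pm \le v^\pm$ and $\int (v^\pm - u^\pm)$ small, then $u := u^+ - v^-$ and $v := v^+ - u^-$ work for $f$: $u$ is upper semicontinuous and bounded above (since $-v^-$ is upper semicontinuous and bounded above by $0$), $v$ is lower semicontinuous and bounded below, $u \le f \le v$, and $\int(v-u) = \int(v^+ - u^+) + \int(v^- - u^-)$. Hence the ``furthermore'' part for $f \ge 0$ is the substantive content. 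Given $f \in L^1(V)$ with $f \ge 0$, the standard truncated dyadic approximation $f_n \uparrow f$ yields a telescoping decomposition into nonnegative simple functions whose level sets provide an expression
\[
f = \sum_{\ell \ge 1} c_\ell \, \mathbf{1}_{E_\ell}, \qquad c_\ell > 0, \qquad \sum_{\ell \ge 1} c_\ell \, m(E_\ell) = \int_V f < +\infty,
\]
where $m$ is Lebesgue measure. By inner and outer regularity, for each $\ell$ I pick a compact set $K_\ell \subset E_\ell$ and an open set $U_\ell \supset E_\ell$ with $m(U_\ell \setminus K_\ell) < \epsilon / (c_\ell \, 2^{\ell+1})$.

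The construction is then
\[
v := \sum_{\ell = 1}^{\infty} c_\ell \, \mathbf{1}_{U_\ell}, \qquad u := \sum_{\ell = 1}^{N} c_\ell \, \mathbf{1}_{K_\ell},
\]
for $N$ to be chosen large. As a countable sum of nonnegative lower semicontinuous functions (increasing partial sums), $v$ is itself lower semicontinuous and bounded below by $0$; as a finite sum of nonnegative upper semicontinuous functions with compact support, $u$ is upper semicontinuous, bounded, compactly supported, and nonnegative, which gives the ``furthermore'' claim. The sandwich $u \le f \le v$ holds pointwise from $K_\ell \subset E_\ell \subset U_\ell$. The main obstacle, and the reason $u$ must be truncated at $N$, is controlling $\int(v-u)$: writing
\[
v - u = \sum_{\ell = 1}^{N} c_\ell\bigl(\mathbf{1}_{U_\ell} - \mathbf{1}_{K_\ell}\bigr) + \sum_{\ell > N} c_\ell \, \mathbf{1}_{U_\ell},
\]
the first piece integrates to at most $\sum_{\ell \ge 1} \epsilon / 2^{\ell+1} \le \epsilon/2$, while the second, using $m(U_\ell) \le m(E_\ell) + \epsilon/(c_\ell \, 2^{\ell+1})$, is bounded by $\epsilon/2 + \sum_{\ell > N} c_\ell \, m(E_\ell)$, which is $\epsilon/2$ plus a tail of the convergent series $\int f$ and can therefore be made less than $\epsilon$ by taking $N$ sufficiently large. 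Relabelling $\epsilon$ yields the stated bound.
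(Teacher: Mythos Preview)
Your argument is correct and is precisely the classical proof: decompose a nonnegative $f$ as $\sum_\ell c_\ell \mathbf{1}_{E_\ell}$, sandwich each $E_\ell$ between a compact $K_\ell$ and an open $U_\ell$ via regularity, take $v$ as the full sum over the $U_\ell$ and $u$ as a sufficiently long finite sum over the $K_\ell$, and reduce the signed case by splitting $f = f^+ - f^-$. The paper does not supply its own proof of this statement; it simply cites Rudin \cite[\S~2.25]{R87} and remarks that the ``furthermore'' clause (nonnegativity and compact support of $u$ when $f\ge 0$) is visible from that proof---which is exactly what your construction makes explicit, since your $u=\sum_{\ell\le N} c_\ell \mathbf{1}_{K_\ell}$ is manifestly nonnegative with support in the compact set $\bigcup_{\ell\le N} K_\ell$.
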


\begin{remark}
The last part of Theorem \ref{TVC} is not stated in \cite[\S~2.25]{R87} but it can be seen from the proof in \cite[\S~2.25]{R87}.
\end{remark}

\begin{proof}[Proof of Theorem \ref{mthm}]
It follows from Theorem \ref{TVC} that for each $1 \le i \le k$, there exists a sequence $\{u_{i,n}\}_{n \ge 1} \subset L^1(E_i)$ such that for each $n \ge 1$, $u_{i,n}$ is non-negative, upper semicontinuous, with compact support, $u_{i,n} \le f_i$, and $u_{i,n} \to f_i$ in $L^1(E_i)$ as $n \to +\infty$. Similarly, for each $1 \le j \le m$, there exists a sequence $\{v_{j,n}\}_{n \ge 1} \subset L^1(E^j)$ such that for each $n \ge 1$, $v_{j,n}$ is non-negative, lower semicontinuous, $g_j \le v_{j,n}$, and $v_{j,n} \to g_j$ in $L^1(E^j)$  as $n \to +\infty$. Now, for each $n \ge 1$, by \eqref{ass3} we have 
\[
\prod_{i = 1}^k (u_{i,n})^{c_i}(\pi_{E_i} x) \le \prod_{j = 1}^m (v_{j,n})^{d_j}(\pi_{E^j} \Q x), \qquad \forall \, x \in E_0.
\]
Applying Proposition \ref{p1}, we obtain 
\begin{align*}
\prod_{i = 1}^k (e^{t\Delta_{E_i}}u_{i,n})^{c_i}(\pi_{E_i} x) \le \prod_{j = 1}^m (e^{t\Delta_{E^j}}v_{j,n})^{d_j}(\pi_{E^j} \Q x), \qquad \forall \, x \in E_0, t > 0.
\end{align*}
Now fix $x \in E_0$ and $t > 0$. We let $n \to +\infty$. Since all the heat kernels are bounded functions for fixed time, the $L^1$ convergences imply the pointwise convergences, and we prove Theorem \ref{mthm}.
\end{proof}

\begin{remark}
The main difficulty of the approximation process lies in the preservation of the relation \eqref{ass3}. For example, it is easy to assume that each $f_i$ is bounded and has compact support, but the boundedness for each $g_j$ is harder since we need to take care of the relation  \eqref{ass3}. To be more precise, even a change of values on a small set in $E_0$ will affect the values of $(g_j)_{1 \le j \le m}$ a lot. For example, if $\Q$ is a rotation on $\R^2$. Let $L := [-\epsilon,\epsilon]\times[-M,M]$. Then with $\epsilon$ small the measure of the projection of $L$ onto the first variable is small. However, we cannot control the measure of the projection of $\Q(L)$ onto the first variable for every fixed $\Q$. This explains why we used  upper/lower semicontinuous functions instead of  continuous functions.
\end{remark}

To prove Corollary \ref{cor1} we need the following long-time behavior of the heat equation, which is a direct result of \eqref{conv}, \eqref{exphk}, and dominated convergence theorem. 
\begin{lemma}\label{lth}
Assume $V$ is a Euclidean space, $A \in \SS(V)$, and $f \in L^1(V)$. Then for every $x \in V$, we have
\[
\lim_{t \to +\infty}(4\pi t)^{\frac{\mathrm{dim} V}{2}}(e^{t \Delta_{V, A}} f)(2\sqrt{t} x)= \frac{1}{\sqrt{\det(A)}} \, e^{-\l A^{-1} x,x \r_V} \left(\int_V f \right).
\]
\end{lemma}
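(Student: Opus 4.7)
My plan is to prove Lemma \ref{lth} by direct substitution of the explicit heat kernel formula \eqref{exphk} into \eqref{conv}, followed by a straightforward application of the dominated convergence theorem, just as the author suggests.

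First, I would write out
\[
(4\pi t)^{\frac{\dim V}{2}}(e^{t\Delta_{V,A}} f)(2\sqrt{t} x) = \frac{1}{\sqrt{\det(A)}} \int_V f(y)\, e^{-h_t(x,y)}\, dy,
\]
where
\[
h_t(x,y) := \frac{\l A^{-1}(2\sqrt{t}x - y),\, 2\sqrt{t}x - y \r_V}{4t}.
\]
Expanding the quadratic form gives
\[
h_t(x,y) = \l A^{-1} x, x \r_V - \frac{\l A^{-1} x, y \r_V}{\sqrt{t}} + \frac{\l A^{-1} y, y \r_V}{4t},
\]
so that for each fixed $x, y \in V$, the second and third terms vanish as $t \to +\infty$ and therefore $h_t(x,y) \to \l A^{-1}x, x \r_V$. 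Consequently the integrand converges pointwise in $y$ to $\frac{1}{\sqrt{\det(A)}} f(y) e^{-\l A^{-1}x, x\r_V}$.

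The key observation that makes dominated convergence applicable is that $A^{-1} \in \SS(V)$, so $h_t(x,y) \ge 0$ for every $t > 0$ and every $x, y \in V$. This yields the uniform bound
\[
\left| \frac{1}{\sqrt{\det(A)}} f(y)\, e^{-h_t(x,y)} \right| \le \frac{|f(y)|}{\sqrt{\det(A)}},
\]
and the right-hand side is integrable because $f \in L^1(V)$. Applying the dominated convergence theorem now pulls the limit inside the integral and yields the claimed identity.

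There is no real obstacle here; the only thing to check is the positivity of $h_t$, which follows immediately from $A^{-1} \in \SS(V)$, and that the quadratic expansion is done correctly. I would state the proof in about half a page along the lines above.
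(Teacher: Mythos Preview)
Your proposal is correct and is exactly the argument the paper has in mind: the paper states the lemma ``is a direct result of \eqref{conv}, \eqref{exphk}, and dominated convergence theorem'' without writing out the details, and you have filled those in accurately, including the key domination via $h_t(x,y) \ge 0$ coming from $A^{-1} \in \SS(V)$.
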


\begin{proof}[Proof of Corollary \ref{cor1}]
Without loss of generality we can assume all  functions $(f_i)_{1 \le i \le k}, (g_j)_{1 \le j \le m}$ are  integrable. Then Theorem \ref{mthm} yields 
\begin{align*}
\prod_{i = 1}^k (e^{t\Delta_{E_i}}f_i)^{c_i}(0) \le \prod_{j = 1}^m (e^{t\Delta_{E^j}}g_j)^{d_j}(0), \qquad \forall \, t > 0.
\end{align*}
Multiplying both sides with
\[
\prod_{i = 1}^k (4\pi t)^{\frac{c_i \mathrm{dim} (E_i)}{2}} = \prod_{j = 1}^m (4\pi t)^{\frac{d_j \mathrm{dim} (E^j)}{2}},
\]
where the equality is given by \eqref{ass0}, and letting $t \to +\infty$, we obtain
\[
\prod_{i = 1}^k \left( \int_{E_i} f_i \right)^{c_i} \le \prod_{j = 1}^m  \left( \int_{E^j} g_j \right)^{d_j}
\]
by Lemma \ref{lth}. This implies $ C(\b{c}, \b{d}, \Q) \le 1$. To prove the opposite direction, it suffices to notice that from \eqref{ass2} we have
\[
\sum_{j = 1}^m  d_j |\pi_{E^j} \Q x|_{E^j}^2 \le \sum_{i = 1}^k c_i |\pi_{E_i} x|^2_{E_i}, \qquad \forall \, x \in E_0,
\]
or equivalently
\[
\prod_{i = 1}^k \left( e^{- |\pi_{E_i} x|^2_{E_i}} \right)^{c_i} \le \prod_{j = 1}^m \left( e^{-|\pi_{E^j} \Q x|_{E^j}^2} \right)^{d_j}, \qquad \forall \, x \in E_0.
\]
As a result, the choice $f_i = e^{-  |\cdot|^2_{E_i} }, 1 \le i \le k$ and $g_j = e^{-  |\cdot|_{E^j}^2}, 1 \le j \le m$ gives $ C(\b{c}, \b{d}, \Q) \ge 1$.
\end{proof}

\section{Proof of Theorem \ref{mthm2}}\label{s3}
\setcounter{equation}{0}

\begin{proof}[Proof of Theorem \ref{mthm2}]

(i) $\Rightarrow$ (ii): From (i) there exists a geometric Forward-Reverse Brascamp--Lieb datum $(\b{c}, \b{d}, \Q')$  such that
\[
\Q_{ji}' = D_j^{-1} \Q_{ji} C_i^{-1}, \qquad \forall \, 1 \le i \le k, 1 \le j \le m
\]
for some invertible linear maps $C_i : E_i \to E_i, 1 \le i \le k$, $D_j : E^j \to E^j, 1 \le j \le m$. Now define 
\[
\C := \sum_{i =1}^k  \pi_{E_i}^*  C_i  \pi_{E_i} \qquad \D := \sum_{j =1}^m \pi_{E^j}^* D_j \pi_{E^j}.
\]
With these notations we have $\Q = \D \Q' \C$. In the following we use $\circ$ to denote the composition of maps. Since \eqref{ass2} holds, we have
\[
\prod_{i = 1}^k F_i^{c_i}(\pi_{E_i} x) \le \prod_{j = 1}^m G_j^{d_j}(\pi_{E^j} \Q'  x), \qquad \forall \, x \in E_0,
\]
where $F_i := f_i \circ C_i^{-1}, 1 \le i \le k$, $G_j := g_j \circ D_j, 1 \le j \le m$. It follows from Theorem \ref{mthm} that
\[
\prod_{i = 1}^k (e^{t\Delta_{E_i}}F_i)^{c_i}(\pi_{E_i} x) \le \prod_{j = 1}^m (e^{t\Delta_{E^j}}G_j)^{d_j}(\pi_{E^j} \Q' x), \qquad \forall \, x \in E_0, t > 0.
\] 
By a direct calculation using \eqref{conv} and \eqref{exphk}, we obtain
\[
\prod_{i = 1}^k (e^{t\Delta_{E_i,A_i}}f_i)^{c_i}(C_i^{-1} \pi_{E_i} x) \le \prod_{j = 1}^m (e^{t\Delta_{E^j,A^j}}g_j)^{d_j}( D_j \pi_{E^j} \Q' x), \qquad \forall \, x \in E_0, t > 0.
\] 
where $A_i := C_i^{-1} (C_i^{-1})^*, 1 \le i \le k$, $A^j := D_j D_j^*, 1 \le j  \le m$. This implies \eqref{heat3}.

(ii) $\Rightarrow$ (iii): Assume now \eqref{heat3} holds. We replace $x$ with $2\sqrt{t} x$, multiply both sides with
\[
\prod_{i = 1}^k (4\pi t)^{\frac{c_i \mathrm{dim} (E_i)}{2}} = \prod_{j = 1}^m (4\pi t)^{\frac{d_j \mathrm{dim} (E^j)}{2}},
\]
as before, and let $t \to +\infty$. It follows from Lemma \ref{lth} that
\begin{align}\label{add5}
\prod_{i = 1}^k \left( \int_{E_i} f_i \right)^{c_i} e^{- c_i \langle A_i^{-1} \pi_{E_i}x,\pi_{E_i}x \rangle_{E_i} } \le \frac{\prod_{i = 1}^k (\det (A_i))^{\frac{c_i}{2}}}{\prod_{j = 1}^m (\det (A^j))^{\frac{d_j}{2}}} \prod_{j = 1}^m  \left( \int_{E^j} g_j \right)^{d_j} e^{- d_j \langle (A^j)^{-1} \pi_{E^j} \Q x, \pi_{E^j} \Q x \rangle_{E^j} }
\end{align}
holds for all $x \in E_0$. Now let $x = 0$, we obtain
\[
C(\b{c},\b{d},\Q) \le \frac{\prod_{i = 1}^k (\det (A_i))^{\frac{c_i}{2}}}{\prod_{j = 1}^m (\det (A^j))^{\frac{d_j}{2}}}.
\]
Now  replacing $x$ with $rx$ ($r > 0$) in \eqref{add5},  taking the logarithm, and then dividing both sides by $r^2$, we obtain
\[
- \sum_{i = 1}^k c_i \langle A_i^{-1} \pi_{E_i}x,\pi_{E_i}x \rangle_{E_i} \le \frac{C'}{r^2} - \sum_{j = 1}^m d_j \langle (A^j)^{-1} \pi_{E^j} \Q x, \pi_{E^j} \Q x \rangle_{E^j}.
\]
Here $C'$ is a constant independent of $x \in E_0$ and $r > 0$. Letting $r \to +\infty$ we have 
\[
\sum_{j = 1}^m d_j \langle (A^j)^{-1} \pi_{E^j} \Q x, \pi_{E^j} \Q x \rangle_{E^j} \le  \sum_{i = 1}^k c_i \langle A_i^{-1} \pi_{E_i}x,\pi_{E_i}x \rangle_{E_i},
\]
or equivalently
\[
\prod_{i = 1}^k \left( e^{-  \langle A_i^{-1} \pi_{E_i}x,\pi_{E_i}x \rangle_{E_i} } \right)^{c_i} \le \prod_{j = 1}^m \left( e^{- \langle (A^j)^{-1} \pi_{E^j} \Q x, \pi_{E^j} \Q x \rangle_{E^j} } \right)^{d_j}.
\]
This means the choice $f_i = e^{- \langle A_i^{-1} \pi_{E_i}x,\pi_{E_i}x \rangle_{E_i} }, 1 \le i \le k$, $g_j = e^{- \langle (A^j)^{-1} \pi_{E^j} \Q x, \pi_{E^j} \Q x \rangle_{E^j} }, 1 \le j \le m$ gives 
\[
C(\b{c},\b{d},\Q) = \frac{\prod_{i = 1}^k (\det (A_i))^{\frac{c_i}{2}}}{\prod_{j = 1}^m (\det (A^j))^{\frac{d_j}{2}}} = \frac{\prod_{i = 1}^k \left( \int_{E_i} f_i \right)^{c_i}}{\prod_{j = 1}^m  \left( \int_{E^j} g_j \right)^{d_j}}.
\]
This is exactly (iii).

(iii) $\Rightarrow$ (i): Since we have Gaussian extremizers, the Forward-Reverse Brascamp--Lieb datum  $(\b{c}, \b{d}, \Q)$ is equivalent to some geometric one by \cite[Theorem 1.23]{CL21}.

\end{proof}

\section*{Acknowledgement}
\setcounter{equation}{0}

This project has received funding from (i)~the European Research Council (ERC) under the European Union's Horizon 2020 research and innovation programme (grant agreement GEOSUB, No. 945655); (ii)~the JSPS Grant-in-Aid for Early-Career Scientists (No. 24K16928).

YZ would like to thank Neal Bez for his kind invitation to Saitama University and thank Anthony Gauvan and Hiroshi Tsuji for the fruitful discussions during the visit. YZ also would like to thank the anonymous referee for many valuable suggestions and remarks.


\begin{thebibliography}{10}

\bibitem{B982}
F.~Barthe.
\newblock On a reverse form of the {B}rascamp-{L}ieb inequality.
\newblock {\em Invent. Math.}, 134(2):335--361, 1998.

\bibitem{B98}
F.~Barthe.
\newblock Optimal {Y}oung's inequality and its converse: a simple proof.
\newblock {\em Geom. Funct. Anal.}, 8(2):234--242, 1998.

\bibitem{BC04}
F.~Barthe and D.~Cordero-Erausquin.
\newblock Inverse {B}rascamp-{L}ieb inequalities along the heat equation.
\newblock In {\em Geometric aspects of functional analysis}, volume 1850 of
  {\em Lecture Notes in Math.}, pages 65--71. Springer, Berlin, 2004.

\bibitem{BCLM11}
F.~Barthe, D.~Cordero-Erausquin, M.~Ledoux, and B.~Maurey.
\newblock Correlation and {B}rascamp-{L}ieb inequalities for {M}arkov
  semigroups.
\newblock {\em Int. Math. Res. Not. IMRN}, (10):2177--2216, 2011.

\bibitem{BH09}
F.~Barthe and N.~Huet.
\newblock On {G}aussian {B}runn-{M}inkowski inequalities.
\newblock {\em Studia Math.}, 191(3):283--304, 2009.

\bibitem{BB19}
J.~Bennett and N.~Bez.
\newblock Generating monotone quantities for the heat equation.
\newblock {\em J. Reine Angew. Math.}, 756:37--63, 2019.

\bibitem{BCCT08}
J.~Bennett, A.~Carbery, M.~Christ, and T.~Tao.
\newblock The {B}rascamp-{L}ieb inequalities: finiteness, structure and
  extremals.
\newblock {\em Geom. Funct. Anal.}, 17(5):1343--1415, 2008.

\bibitem{B00}
C.~Borell.
\newblock Diffusion equations and geometric inequalities.
\newblock {\em Potential Anal.}, 12(1):49--71, 2000.

\bibitem{BKX23}
K.~J. Boroczky, P.~Kalantzopoulos, and D.~Xi.
\newblock {\em The Case of Equality in Geometric Instances of Barthe's Reverse
  Brascamp-Lieb Inequality}, pages 129--165.
\newblock Springer International Publishing, Cham, 2023.

\bibitem{BL76}
H.~J. Brascamp and E.~H. Lieb.
\newblock Best constants in {Y}oung's inequality, its converse, and its
  generalization to more than three functions.
\newblock {\em Advances in Math.}, 20(2):151--173, 1976.

\bibitem{BLL74}
H.~J. Brascamp, E.~H. Lieb, and J.~M. Luttinger.
\newblock A general rearrangement inequality for multiple integrals.
\newblock {\em J. Functional Analysis}, 17:227--237, 1974.

\bibitem{CC09}
E.~A. Carlen and D.~Cordero-Erausquin.
\newblock Subadditivity of the entropy and its relation to {B}rascamp-{L}ieb
  type inequalities.
\newblock {\em Geom. Funct. Anal.}, 19(2):373--405, 2009.

\bibitem{CLL04}
E.~A. Carlen, E.~H. Lieb, and M.~Loss.
\newblock A sharp analog of {Y}oung's inequality on {$S^N$} and related entropy
  inequalities.
\newblock {\em J. Geom. Anal.}, 14(3):487--520, 2004.

\bibitem{CL21}
T.~A. Courtade and J.~Liu.
\newblock Euclidean forward-reverse {B}rascamp-{L}ieb inequalities: finiteness,
  structure, and extremals.
\newblock {\em J. Geom. Anal.}, 31(4):3300--3350, 2021.

\bibitem{G09}
A.~Grigor'yan.
\newblock {\em Heat kernel and analysis on manifolds}, volume~47 of {\em AMS/IP
  Studies in Advanced Mathematics}.
\newblock American Mathematical Society, Providence, RI; International Press,
  Boston, MA, 2009.

\bibitem{ILS24}
K.~Ishige, Q.~Liu, and P.~Salani.
\newblock A parabolic {PDE}-based approach to {B}orell-{B}rascamp-{L}ieb
  inequality.
\newblock {\em Math. Ann.}, 392(4):4891--4937, 2025.

\bibitem{L14}
J.~Lehec.
\newblock Short probabilistic proof of the {B}rascamp-{L}ieb and {B}arthe
  theorems.
\newblock {\em Canad. Math. Bull.}, 57(3):585--597, 2014.

\bibitem{L90}
E.~H. Lieb.
\newblock Gaussian kernels have only {G}aussian maximizers.
\newblock {\em Invent. Math.}, 102(1):179--208, 1990.

\bibitem{LCCV18}
J.~Liu, T.~A. Courtade, P.~W. Cuff, and S.~Verd\'u.
\newblock A forward-reverse {B}rascamp-{L}ieb inequality: entropic duality and
  {G}aussian optimality.
\newblock {\em Entropy}, 20(6):Paper No. 418, 32, 2018.

\bibitem{R87}
W.~Rudin.
\newblock {\em Real and complex analysis}.
\newblock McGraw-Hill Book Co., New York, third edition, 1987.

\bibitem{V08}
S.~I. Valdimarsson.
\newblock Optimisers for the {B}rascamp-{L}ieb inequality.
\newblock {\em Israel J. Math.}, 168:253--274, 2008.

\end{thebibliography}

\mbox{}\\
Ye Zhang\\
Scuola Internazionale Superiore di Studi Avanzati (SISSA)\\
Via Bonomea 265, 34136 Trieste, Italy \\
E-mail: yezhang@sissa.it  \quad or \quad  zhangye0217@gmail.com \mbox{}\\

\end{document}